\newcommand{\C}{\mbox{\rm \,l\kern-0.52em C}}
\newcommand{\Ce}{\rm \,l\kern-0.35em C}
\newcommand{\R}{{\rm l}\!{\rm R}}
\newcommand{\Z}{{\sf Z}\!\!{\sf Z}}
\newtheorem{theorem}{Theorem}[section]
\newtheorem{prop}[theorem]{Proposition}
\newtheorem{cor}[theorem]{Corollary}
\newtheorem{lemma}[theorem]{Lemma}
\renewenvironment{proof}{{\bf Proof:}}{\mbox{}\hfill $\Box$}
\newenvironment{proof21}{{\bf Proof of Theorem 3.1:}}{\mbox{}\hfill $\Box$}
\newenvironment{proof22}{{\bf Proof of Theorem 3.2:}}{\mbox{}\hfill $\Box$}
\newenvironment{proof23}{{\bf Proof of Theorem 3.3:}}{\mbox{}\hfill $\Box$}
\theoremstyle{definition}
\newtheorem{definition}[theorem]{Definition}
\title{Finitely summable Fredholm modules over higher rank groups and
lattices}
\author{Michael Puschnigg}
\date{}
\begin{document}
\maketitle
\section{Introduction}
In this note we use recent progress in rigidity theory to adress the question of existence of finitely summable Fredholm modules over  
group $C^*$-algebras. 

We begin by giving some motivation and recalling previous work on this problem.

According to Kasparov \cite{Ka1}, the $K$-homology groups of a $C^*$-algebra $A$ may be defined as the group of homotopy classes of bounded Fredholm modules $A$. In \cite{Co1} Connes constructed a Chern-character for Fredholm modules which satisfy a strong regularity condition, called finite summability (with respect to a dense subalgebra of $A$). This Chern-character is given by a completely explicit cyclic cocycle. In particular, the index pairing of the given module with $K$-theory can be calculated by a simple index formula. 

It appears therefore natural to ask whether every $K$-homology class can be represented by a finitely summable module.

In the case of the $C^*$-algebra of continuous functions on a smooth compact manifold the answer is affirmative. Every $K$-homology class may be represented by a module which is finitely summable over the algebra of smooth functions and Connes' character formula boils down to the classical Atiyah-Singer index formula in de Rham cohomology.

The situation changes if one passes to noncommutative $C^*$-algebras.

Recall that $K$-homology of $C^*$-algebras can be described either in terms of bounded or in terms of unbounded Fredholm modules.
In \cite{Co3} Connes has shown that there are no finitely summable unbounded Fredholm modules over the reduced $C^*$-algebra of any nonamenable discrete group at all! For bounded modules the story is different. In \cite{Co1} Connes constructs bounded Fredholm modules over the reduced group $C^*$-algebras of free groups and lattices in $SU(1,1)$ which are finitely summable over the group ring while representing nontrivial classes in $K$-homology. (Note that these groups are not amenable.)

The main observation in this note is that such examples do not exist 
if one passes to lattices in higher rank Lie groups. In fact we have

\begin{theorem}
\begin{itemize}
\item[i)] Let $G$ be a product of simple real Lie groups of real rank at least 2. Then every finitely summable Fredholm representation (see section 1.2) of $G$ is homotopic to a multiple of thr trivial representation.

\item[ii)] Let $\Gamma$ be a lattice in a product of simple real Lie groups of real rank at least two. Then every Fredholm module over $C^*_{max}(\Gamma)$, which is finitely summable over $\C\Gamma$, is homotopic to a finite dimensional virtual representation of $\Gamma$.
\end{itemize}
\end{theorem}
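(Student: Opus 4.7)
The plan is to encode finite summability as a cohomological condition over $G$ (respectively $\Gamma$) and then to apply the modern, higher-rank rigidity theorems which prohibit such cocycles.

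First I would reformulate finite summability. A finitely $p$-summable Fredholm representation $(\pi_+,\pi_-,F)$ of $G$ on $\Hh_+\oplus\Hh_-$ is one in which
\[
c(g) \;=\; \pi_-(g)F - F\pi_+(g)
\]
lies in a Schatten ideal $\mathcal{L}^p(\Hh_+,\Hh_-)$ for every $g$ and depends continuously on $g\in G$. The assignment $c$ is then a continuous $1$-cocycle for the conjugation $G$-action $T\mapsto\pi_-(g)T\pi_+(g)^{-1}$ on the Banach $G$-module $\mathcal{L}^p(\Hh_+,\Hh_-)$. Conversely, any such cocycle with Fredholm zero-cochain $F$ encodes a finitely summable Fredholm representation, so the problem is now purely cohomological.

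Next I would invoke higher-rank cohomological rigidity. For a product of simple real Lie groups of real rank at least two, recent rigidity results (the Bader--Furman--Gelander--Monod property $F_{L^p}$ for higher-rank lattices, or the Shalom/Monod cohomological vanishing theorems) assert that every continuous $1$-cocycle of $G$ with values in a separable uniformly convex Banach $G$-module is a coboundary. Since $\mathcal{L}^p$ is uniformly convex for $1<p<\infty$, this produces $T\in\mathcal{L}^p(\Hh_+,\Hh_-)$ with $c(g)=\pi_-(g)T-T\pi_+(g)$. The linear path $F_s=F-sT$ is then a homotopy through finitely summable Fredholm representations connecting $(\pi_+,\pi_-,F)$ to $(\pi_+,\pi_-,F-T)$, and the latter is a genuine intertwiner, hence a degenerate module. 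A degenerate Fredholm representation is homotopic to a multiple of the trivial representation by a standard scaling/Cayley transform, yielding~(i).

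For~(ii) I would transfer the question to $G$: the unitary components $\pi_\pm$ of a Fredholm module over $C^*_{max}(\Gamma)$ can be induced to unitary representations of $G$, and the summability condition over $\C\Gamma$ translates into a cocycle condition after an induction/Shapiro isomorphism in continuous cohomology. Applying the rigidity of step~2 (either directly to $\Gamma$, which inherits the relevant property from $G$, or to the induced cocycle on $G$) produces a coboundary modulo a finite-dimensional obstruction. The residual finite-dimensional discrepancy -- reflecting the fact that $\Gamma$, unlike $G$, can carry nontrivial finite-dimensional representations -- is precisely what gives rise to the ``finite-dimensional virtual representation'' appearing in the conclusion.

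The hard part will be Step~2: classical Kazhdan property $(T)$ covers only Hilbert-module coefficients, so the argument genuinely requires the more recent cohomological vanishing theorems for cocycles valued in Schatten classes or, more generally, in uniformly convex Banach $G$-modules. Verifying the hypotheses of those theorems for the conjugation action on $\mathcal{L}^p$ determined by $\pi_\pm$ -- in particular ruling out invariant vectors on the subquotients that matter -- is where the actual rigidity input of the paper must enter. A secondary technical point is ensuring that the straight-line homotopy $F_s$ stays inside the class of finitely $p$-summable Fredholm representations, which should follow from the ideal property of $\mathcal{L}^p$ and the fact that Fredholmness is stable under Schatten-class perturbations.
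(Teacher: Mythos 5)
Your overall strategy is the same as the paper's: encode $p$-summability of $F$ as a continuous $1$-cocycle for the conjugation action on a Schatten ideal, kill the cocycle by a higher-rank rigidity theorem, and analyze the remaining (now strictly $G$-invariant) Fredholm operator. But there are two genuine gaps.

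\textbf{The rigidity input is overstated.} You invoke a vanishing theorem for ``every continuous $1$-cocycle of $G$ with values in a separable uniformly convex Banach $G$-module''. No such theorem exists: Bader--Furman--Gelander--Monod prove it only for the commutative spaces $L^p(X,\mu)$, and for general uniformly convex spaces it is a conjecture which they explicitly pose but do not prove. The hard part of the paper is precisely to make the rigidity result apply to $\ell^p(\Hh)$, which is \emph{not} an $L^p$-space. The paper's route is through BFGM's conditional Theorem~4.3: one needs an orthogonal $G$-representation on a Hilbert space together with a $G$-equivariant \emph{uniformly continuous homeomorphism} of unit spheres $S(B)\to S(\Hh)$ in both directions. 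The paper supplies this via the Mazur map $M_{p,2}:S(\ell^p(\Hh))\to S(\ell^2(\Hh))$, $T\mapsto\sign(T)|T|^{p/2}$, and proves its uniform continuity using Ando's generalized Powers--St{\o}rmer inequality. Your phrasing (``ruling out invariant vectors on the subquotients that matter'') points at the wrong difficulty; the splitting off of invariants is automatic in the uniformly convex setting, and it is the sphere homeomorphism that requires work.

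\textbf{The final homotopy step is incomplete.} After subtracting the coboundary $T$ one obtains $F'=F-T$ commuting with $\rho(G)$, but this is not a degenerate module: $F'^2-1$ is still only $\ell^p$, not zero. The paper argues further: after making $F'$ self-adjoint, both $\Ker F'$ and its orthogonal complement $\Hh'$ are $\rho(G)$-invariant; on $\Hh'$ one can homotope $F'$ to $F'/|F'|$ (a symmetry commuting with $G$), which \emph{is} degenerate; and one is left with the genuinely finite-dimensional module $(\Ker F',\ \rho|_{\Ker F'},\ 0)$. For part~(i), to conclude that this is a multiple of the trivial representation one still needs that $G$ has \emph{no} nontrivial finite-dimensional unitary representations, which the paper derives from Moore's ergodicity theorem (the determinant, a matrix-coefficient polynomial, would have to vanish at infinity yet have modulus $1$). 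A ``standard scaling/Cayley transform'' does not supply this; the kernel of $F'$ could a priori carry a nontrivial $G$-action. For part~(ii) the residual finite-dimensional representation of $\Gamma$ is precisely the content of the statement, and your intuition there is correct; but making the tautological map $R(\Gamma)\to{\cal KK}^{(p)}_\Gamma(\C,\C)$ injective additionally requires that homotopies between finite-dimensional unitary representations of $\Gamma$ are constant, which the paper obtains from property~(T). A small further remark: the formula $c(g)=\pi_-(g)F-F\pi_+(g)$ is not a cocycle for the conjugation action you name; the correct cocycle is $\psi(g)=\pi_-(g)F\pi_+(g)^{-1}-F$, although the two differ only by right multiplication by a unitary.
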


As a consequence we obtain in section 2 a complete description of the 
classes in $KK_G(\C,\C)$, $KK_*(C^*_{max}(\Gamma),\C)$ and $KK_*(C^*_{red}(\Gamma),\C)$ which can be realized by finitely summable Fredholm representations or Fredholm modules. 

In particular, no nontrivial class in $KK_*(C^*_{red}(\Gamma),\C)$, $\Gamma$ a higher rank lattice,  can be represented by a finitely summable module. This shows that for noncommutative $C^*$-algebras it is in general not possible to realise every $K$-homology class by a finitely summable Fredholm module. 

The previous results are a simple consequence of recent deep work by Bader, Furman, Gelander and Monod in rigidity theory. They conjecture that every equicontinuous isometric action of a higher rank group or lattice on a uniformly convex Banach space admits a global fixed point. They verify this conjecture for isometric actions on classical $L^p$-spaces.
One of their main results reads as follows: 

\begin{theorem}{\bf (Bader, Furman, Gelander, Monod)} \cite{BFGM}
Let $G$ be a higher rank group or a higher rank lattice as in Theorem 1.1. Let $(X,\mu)$ be a standard measure space and let $\rho$ be an isometric linear representation of $G$ on $L^p(X,\mu)$. 
Then $$H^1_{cont}(G,\rho)\,=\,0$$ for $1<p<\infty$. 
\end{theorem}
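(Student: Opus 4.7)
The strategy is to prove an $L^p$-analogue of Kazhdan's property (T), which via the Delorme--Guichardet correspondence yields the desired vanishing of the first continuous cohomology. The plan has four steps: decompose the representation, reformulate $H^1$-vanishing as a fixed-point property, establish the $L^p$-spectral gap, and handle lattices by an induction argument.

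First I would decompose $L^p(X,\mu) = L^p(X,\mu)^G \oplus W$ as a direct sum of closed $G$-invariant subspaces with $W$ carrying no nonzero $G$-invariant vector, using the uniform convexity of $L^p$ to construct a $G$-equivariant complementary projection. Since $H^1_{cont}(G,\R)=0$ for products of simple Lie groups of real rank at least two, it suffices to prove the vanishing on $W$. Any continuous $1$-cocycle $b:G\to W$ corresponds to an affine isometric action $g\cdot v = \rho(g)v+b(g)$ with linear part $\rho|_W$; triviality of the cohomology class amounts to the existence of a fixed point. By uniform convexity, a bounded $G$-orbit admits a unique circumcenter, which is necessarily $G$-fixed, so the problem reduces to showing that every continuous cocycle $b$ has bounded image.

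The central step is the $L^p$-spectral gap: produce a Kazhdan pair $(K,\varepsilon)$, i.e.\ a compact set $K\subset G$ and $\varepsilon>0$, such that $\max_{g\in K}\|\rho(g)v-v\|\ge \varepsilon\|v\|$ for every $v\in W$. For $p=2$ this is exactly property (T), which holds for higher rank semisimple Lie groups by Kazhdan's theorem. For general $1<p<\infty$ one transfers the $L^2$-gap to $L^p$ via the Mazur map $M_p: L^p\to L^2$ combined with Clarkson-type inequalities and uniform-convexity estimates. Once the gap is in hand, an unbounded $1$-cocycle would, after suitable normalisation of its values $b(g)/\|b(g)\|$ along a sequence $g_n$ with $\|b(g_n)\|\to\infty$, produce an almost-$\rho$-invariant sequence contradicting $(K,\varepsilon)$; hence $b$ is bounded and the orbit circumcenter provides a fixed point.

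For a lattice $\Gamma\subset G$, Shapiro's lemma gives $H^1_{cont}(\Gamma,\rho)\cong H^1_{cont}(G,\Ind_\Gamma^G\rho)$, and the induced representation acts isometrically on $L^p(G/\Gamma\times X,\nu\otimes\mu)$, which is again an $L^p$-space over a standard measure space, so the conclusion for lattices reduces to that for $G$. The main obstacle is the extension of the spectral gap from $L^2$ to $L^p$ for $p\neq 2$: the Mazur map is only uniformly continuous and nonlinear, so transferring quantitative property-(T) estimates demands careful control of displacement on the unit sphere in terms of the modulus of convexity of $L^p$. This geometric analysis of $L^p$ spaces is the technical core of the Bader--Furman--Gelander--Monod argument.
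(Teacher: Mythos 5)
Your decomposition into $L^p(X,\mu)^G\oplus W$, the circumcenter argument for bounded orbits, and the reduction of the lattice case by induction are all standard and fine. The paper does not reproduce the BFGM proof at all---it cites it---but it does record (as Theorem 4.3) the mechanism BFGM actually use, and comparing your plan against that reveals a genuine gap in your central step. Theorem 4.3 transfers the \emph{vanishing of} $H^1$ itself from a Hilbert-space representation to a uniformly convex Banach representation via a $G$-equivariant, uniformly continuous homeomorphism of unit spheres (the Mazur map). You instead propose to transfer a \emph{spectral gap} (a Kazhdan pair $(K,\varepsilon)$ for the linear representation on $W$) and then deduce boundedness of cocycles by a normalisation argument.

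That deduction does not work. ``Spectral gap for $\rho\vert_W$'' does not imply that continuous $1$-cocycles with values in $W$ are bounded. Even over Hilbert space, the statement ``no almost-invariant unit vectors'' only yields that $B^1(G,\rho)$ is closed in $Z^1(G,\rho)$ (Guichardet); it does not give $H^1=0$. A concrete witness is the free group $F_2$ with its left regular representation $\lambda$: $F_2$ is nonamenable, so $\lambda$ has a spectral gap, yet $H^1(F_2,\lambda)\neq 0$ (the first $\ell^2$-Betti number of $F_2$ is $1$). Your proposed normalisation $v_n=b(g_n)/\Vert b(g_n)\Vert$ fails exactly here: from the cocycle identity one only gets
$\rho(g)v_n-v_n=\dfrac{b(gg_n)-b(g_n)-b(g)}{\Vert b(g_n)\Vert}$,
and there is no a priori control of $\Vert b(gg_n)-b(g_n)\Vert$ relative to $\Vert b(g_n)\Vert$, so $(v_n)$ need not be almost invariant. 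In the Hilbert case one bridges this gap with Delorme's theorem, which uses conditional negative definiteness of $g\mapsto\Vert b(g)\Vert^2$ and positive-definite functions $e^{-t\Vert b(g)\Vert^2}$ --- tools that rely on the inner product and do not transport along the (nonlinear, merely uniformly continuous) Mazur map. This is precisely why BFGM had to develop the unit-sphere transfer principle of Theorem 4.3: it pushes the affine fixed-point property directly from $L^2$ to $L^p$, bypassing any spectral-gap intermediary. So the technical core of the argument is not, as you suggest, a quantitative $L^p$ Kazhdan pair; it is the uniform-convexity geometry that makes the unit-sphere equivalence usable at the level of cocycles.
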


Adapted to noncommutative $\ell^p$-spaces their result immediately implies our main theorem. The link is given by the well known correspondence between group cohomology and the Hochschild cohomology of group algebras, as was already observed and exploited by Connes in \cite{Co3}.

After developing the notions of finitely summable $K$-homology and finitely\\ summable Fredholm representations in section 2 we are ready to state our results in section 3. Section 4 recalls the parts of the work of Bader, Furman, Gelander and Monod needed for our purpose and in section 5  we finally give the proofs of the theorems stated in section 3.

I want to express my gratitude to Thierry Fack, Fuad Kittaneh, Hideki Kosaki and Antony Wassermann for helpful discussions and correspondence concerning eigenvalue estimates for compact operators and in particular   Hideki Kosaki for bringing Ando's work \cite{A} to my attention.

\section{The setup}
\subsection{Smooth $K$-homology}

The $K$-homology groups of a $C^*$-algebra $A$ are defined, according to
Kasparov \cite{Ka1}, as the groups of homotopy classes of Fredholm
modules  over $A$. 

Recall that an even {\bf Fredholm module} ${\cal E}\,=\,({\cal
H}^\pm,\,\rho^\pm,\,F)$ over $A$ is given by 
 a $\Z/2\Z$-graded complex Hilbert space $\cal H=\cal H^\pm$, a
pair $\rho=\rho^\pm:A\,\to\,{\cal L(H)}^\pm$ of representations of $A$,
and an odd bounded linear operator $F:\,\cal H^\pm\,\to\,\cal H^\mp$ satisfying
$$
\begin{array}{ccc}
\rho(A)(F^2-1)\subset{\cal
K(H)}, & \rho(A)(F^*-F)\subset{\cal K(H)}, & [F,\rho(A)]\subset{\cal K(H)}.\\
\end{array}
\eqno(2.1) 
$$

Here ${\cal
K(H)}$ denotes the ideal of compact operators in ${\cal
L(H)}$.
The notion of an odd Fredholm representation is
obtained by forgetting the $\Z/2\Z$-grading. 

Fredholm modules should be viewed as generalized elliptic operators over $A$.
In fact, if $M$ is a smooth compact manifold without boundary, then every
linear elliptic differential operator over $M$ gives rise to a Fredholm module
over $A=C(M)$, the algebra of continuous complex-valued functions on $M$. It
can be shown that such geometric modules generate the whole $K$-homology
group $KK_*(C(M),\C)$ of $M$. 

The previous example suggests a notion of smoothness (finitely summability) for Fredholm modules.

\begin{definition}

Let $A$ be a $C^*$-algebra and let ${\cal E}\,=\,({\cal
H}^\pm,\,\rho^\pm,\,F)$ be a Fredholm module over $A$. Fix $1\leq p < \infty$.
Then $\cal E$ is said to be {\bf p-summable} over the dense (involutive) subalgebra ${\cal
A}\subset A$ if instead of (2.1) the stronger conditions 
$$
\begin{array}{ccc}
\rho({\cal A})(F^2-1)\subset\ell^p({\cal H}), & \rho({\cal A})(F^2-1)\subset\ell^p({\cal H}), & [F,\rho({\cal
A})]\subset\ell^p({\cal H}).\\ 
\end{array}
\eqno(2.2) 
$$
hold where
$$
\ell^p({\cal H})\,=\,\{T\in{\cal K(H)},\,Trace(T^*T)^{\frac{p}{2}}<\infty\,\}
\eqno(2.3) 
$$
denotes the Schatten ideal of $p$-summable compact operators. 
\end{definition}

According to Weyl, the Fredholm module associated to an elliptic operator $\cal
D$ over the smooth compact manifold $M$ is $p$-summable over ${\cal
C}^\infty(M)\subset C(M)$ for\\ $p>\frac{dim(M)}{ord({\cal D})}$.

Kasparov's $K$-homology groups $KK(A,\C)$ are defined as the groups of
equivalence classes of even (odd) Fredholm modules over $A$ with respect to the
equivalence relation generated by unitary equivalence, addition of degenerate
modules, and operator homotopy.

Following Nistor \cite{Ni}, we introduce similar relations for smooth modules.

\begin{definition}
Let $A$ be a $C^*$-algebra and let ${\cal A}\subset A$ be a dense (involutive)
subalgebra. Denote by ${\cal
E}_*^{(p)}((A,{\cal A}),\C)$ the set of even(odd) Fredholm modules over $A$
which are $p$-summable over $\cal A$. 
\end{definition}

Recall that a Fredholm module is called degenerate if all the terms in (2.1)
are identically zero. Every degenerate module over $A$ is thus 
$p$-summable over any dense subalgebra so that the set ${\cal
E}_*^{(p)}((A,{\cal A}),\C)$ is stable under
addition of degenerate modules. There is also an obvious notion of unitary
equivalence among smooth Fredholm modules. One has to be a bit careful about
the appropriate notion of operator homotopy in the smooth context.

\begin{definition}
A smooth operator homotopy between ${\cal E}_0=({\cal H},\rho,F_0)$ and ${\cal
E}_1=({\cal H},\rho,F_1)$, ${\cal E}_0,{\cal E}_1\in{\cal
E}_*^{(p)}((A,{\cal A}),\C)$, is given by a family $F_t,\,t\in[0,1]$
of bounded operators on $\cal H$ connecting $F_0$ and $F_1$ such that
\begin{itemize}
\item ${\cal E}_t=({\cal H},\rho,F_t)\in{\cal
E}_*^{(p)}((A,{\cal A}),\C),\,0\leq t\leq 1$,
\item $\{t\mapsto F_t\}\in{\cal C}^\infty([0,1],{\cal L(H)})$,
\item the maps $a\mapsto \rho(a)(F_t^2-1),\,\,a\mapsto \rho(a)(F_t^*-F_t),$\\

 and $a\mapsto [F_t,\rho(a)]$ lie in ${\cal L}({\cal A},{\cal
C}^\infty([0,1],\ell^p({\cal H}))$.
\end{itemize}
In the last condition $\cal A$ is assumed to be equipped with a bornology
 turning the inclusion $\cal A\subset A$ into a bounded operator. 
Thus bounded linear maps from $\cal A$ to bornological (or Fr\'echet) spaces
make sense. In our examples we will  only deal with the fine bornology
generated by finite subsets, so that the boundedness is automatic.  
\end{definition}

\begin{definition}
Let $1\leq p < \infty$ and let $\cal A$ be a dense (involutive) subalgebra of
the $C^*$-algebra $A$. The smooth ($p$-summable) $K$-homology groups 
$$
\begin{array}{ccc}
{\cal KK}_*^{(p)}((A,{\cal A}),\C) & = & {\cal E}_*^{(p)}((A,{\cal
A}),\C)/\sim \\ 
\end{array}
$$
of the pair $(A,{\cal A})$ are defined as the groups of equivalence classes of
$p$-summable Fredholm modules over $(A,{\cal A})$  with respect to the equivalence
relation generated by unitary equivalence, addition of degenerate modules, and
smooth operator homotopy. 
\end{definition}

There is an obvious homomorphism
$$
\begin{array}{ccc}
{\cal KK}_*^{(p)}((A,{\cal A}),\C) & \longrightarrow & KK_*(A,\C) \\  
\end{array}
\eqno(2.4)
$$
One is mainly interested in the image of this map.

If a $K$-homology class can be represented by a smooth
$K$-cycle, then its Chern-Connes character can be given by a simple explicit
character formula. This makes index calculations for smooth $K$-cycles much
easier than in the general case.

\begin{prop}
Let $\cal A$ be a dense (involutive) subalgebra of the separable $C^*$-algebra
$A$. Let  
$$
\begin{array}{cccc}
\check{ch}^{(p)}: & {\cal KK}_*^{(p)}((A,{\cal A}),\C) & \longrightarrow &
HP^*({\cal A}) \\   
\end{array}
\eqno(2.5)
$$
be Connes' Chern character on smooth $K$-homology with values in
periodic cyclic cohomology \cite{Co1} and let
$$
\begin{array}{cccc}
\check{ch}: & KK_*(A,\C) & \longrightarrow &
HC^*_{loc}(A) \\   
\end{array}
\eqno(2.6)
$$
be the Chern-Connes character on $K$-homology with values in local cyclic
cohomology \cite{Pu}. Then there is a natural commutative diagram
$$
\begin{array}{ccccc}
{\cal KK}_*^{(p)}((A,{\cal A}),\C) & & \longrightarrow & & KK_*(A,\C) \\
 & & & & \\
\check{ch}^{(p)}\downarrow & & & & \downarrow\check{ch} \\
 & & & & \\
HP^*({\cal A}) & \longrightarrow & HC^*_{loc}({\cal A}) & \longleftarrow &
HC^*_{loc}(A) \\
\end{array}
\eqno(2.7)
$$
\end{prop}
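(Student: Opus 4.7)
The plan is as follows. First I would identify the unlabeled maps in the bottom row: $HP^*({\cal A}) \to HC^*_{loc}({\cal A})$ is the canonical comparison map from periodic to local cyclic cohomology of $\cal A$ equipped with its fine bornology, since every periodic cyclic cocycle trivially satisfies the boundedness and growth conditions entering the definition of $HC^*_{loc}$ in \cite{Pu}; and $HC^*_{loc}(A)\to HC^*_{loc}({\cal A})$ is the restriction map induced by the bounded dense inclusion ${\cal A}\hookrightarrow A$ together with contravariance of local cyclic cohomology. Commutativity of the square then reduces to showing that, for each $p$-summable Fredholm module $\Eh$, Connes' explicit finite-dimensional cocycle
$$
\check{ch}^{(p)}(\Eh)(a_0,\dots,a_{2n})\,=\,{\rm const}\cdot {\rm Tr}\bigl(\gamma\rho(a_0)[F,\rho(a_1)]\cdots [F,\rho(a_{2n})]\bigr),\qquad 2n\geq p,
$$
represents in $HC^*_{loc}({\cal A})$ the same class as the restriction to $\cal A$ of $\check{ch}([\Eh])\in HC^*_{loc}(A)$.

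I would establish this at the cochain level. The construction of $\check{ch}$ in \cite{Pu} produces, for any bounded Fredholm module over $A$, a canonical representative cocycle on a locally convex analytic completion of $A$, built out of the Fredholm data by an explicit entire formula. Pulled back along ${\cal A}\hookrightarrow A$, this representative and Connes' finite-dimensional cocycle differ at the cochain level by the coboundary of a standard transgression cochain whose terms are iterated traces of operators of the form $[F,\rho(a)]$ and $F^2-1$. The $p$-summability assumption is precisely what is needed to guarantee that, after restriction to $\cal A$, these iterated products land in the Schatten classes $\ell^p({\cal H})$ and in particular have finite operator trace, so that the transgression defines an honest cochain over $({\cal A},\mbox{fine})$.

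Once the cochain-level identity is in place, compatibility with the equivalence relations defining ${\cal KK}_*^{(p)}((A,{\cal A}),\C)$ on one side and $KK_*(A,\C)$ on the other is routine: smooth operator homotopies, unitary equivalences and degenerate modules are sent to zero by both characters (for $\check{ch}^{(p)}$ this is carried out in \cite{Co1}, for $\check{ch}$ it is built into the construction of \cite{Pu}), so that both vertical arrows are well-defined on equivalence classes. Naturality of the two horizontal arrows in the bottom row then assembles everything into the asserted commutative square.

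The hard part is the cochain-level transgression: one must verify that the transgression chains converge in the correct bornological topology on the cyclic bicomplex of $\cal A$, and that the cocycle from \cite{Pu}, which a priori is only defined on an analytic completion of $A$, genuinely restricts to a bounded cocycle on $\cal A$ with the fine bornology. Away from the $p$-summability hypothesis the two Chern characters live in genuinely different analytic categories, and this is the step where finite summability is doing the essential work; the remainder of the argument is formal and depends only on functoriality of the comparison maps.
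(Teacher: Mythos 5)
The paper states Proposition 2.6 without giving any proof at all: immediately after the diagram (2.7) the exposition passes on to section 2.2. So there is nothing in the paper to compare your argument against; the result is used as background, presumably because the author regards it as a standard compatibility between Connes' finitely summable Chern character of \cite{Co1} and the local cyclic Chern character of \cite{Pu}.

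Taken on its own terms, your proposal is a reasonable plan but not yet a proof. The identification of the two unlabeled bottom arrows (comparison map $HP^*({\cal A})\to HC^*_{loc}({\cal A})$ for $\cal A$ with the fine bornology, restriction along the bounded dense inclusion ${\cal A}\hookrightarrow A$) is correct, and your observation that $p$-summability is precisely the hypothesis needed to make Connes' finite cocycle land in $HP^*({\cal A})$ and to make transgression terms trace-class is the right structural point. However, the step you yourself flag as ``the hard part'' is exactly where the substance lies and it is left entirely open: you assert the existence of a transgression cochain linking the local cyclic representative of $\check{ch}([\Eh])$ to Connes' cyclic cocycle, without saying what the cochain is or why it converges in the bornological/analytic complex used in \cite{Pu}. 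This is nontrivial because the local cyclic cocycle of \cite{Pu} lives on an analytic completion of $A$, whereas Connes' cocycle is a finite-degree cyclic cocycle on $\cal A$; the comparison map $HP^*\to HC^*_{loc}$ is a map of derived-category objects rather than an inclusion of cochain complexes, so ``differ by a coboundary'' needs to be phrased carefully in the language of the $X$-complex or the pro-complex entering $HC^*_{loc}$. A complete proof would either construct the transgression explicitly (extending the entire-cyclic transgression of Connes to the local theory), or invoke a uniqueness/rigidity statement for bivariant Chern characters from \cite{Pu} characterizing $\check{ch}$ by compatibility with $HP^*$ on smooth subalgebras, and neither route is carried out here. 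The remainder of the proposal (well-definedness under smooth operator homotopy, degeneracy, unitary equivalence, and naturality of the bottom arrows) is, as you say, formal once the cochain identity is in place.
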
 

\subsection{Representation rings of locally compact groups}

Let $G$ be a locally compact (second countable) group. We denote by $R(G)$ the
ring of unitary equivalence classes of finite dimensional unitary
representations of $G$. Sum and product in $R(G)$ are given by the direct sum
and the tensor product of (equivalence classes) of representations.

A remarkable generalization of the notion of finite dimensional unitary
representation has been introduced by G.~Kasparov \cite{Ka2}:

\begin{definition}
A {\bf Fredholm representation} ${\cal E}_G\,=\,({\cal H},\rho, F)$ of $G$ is
given by  
\begin{itemize}
\item a $\Z/2\Z$-graded Hilbert space ${\cal H}^\pm$
\item a pair of unitary representations $\rho^\pm:\,G\longrightarrow\,{\cal
U}({\cal H}^\pm)$ of $G$ on the  even(odd) part of $\cal H$.
\item an odd bounded operator $F:{\cal H}^\pm\longrightarrow{\cal H}^\mp$,
which almost intertwines the representations $\rho^+$ and $\rho^-$, i.e.
$$
\begin{array}{c}
\{g\mapsto \rho(g)F\rho(g)^{-1}\}\,\in\,C(G,{\cal L(H)}),  \\
  \\
F^2-1\in{\cal K(H)},\,(F-F^*)\in{\cal K(H)}, 
  \,\,\rho(g)F\rho(g)^{-1}\,-\,F \in{\cal K(H)},\,\forall g\in G. \\
\end{array}
$$
\end{itemize}
\end{definition}

We propose the following notion of a smooth Fredholm representation

\begin{definition}
A Fredholm representation ${\cal E}_G\,=\,({\cal H},\rho, F)$ of a locally
compact group $G$ is {\bf p-summable}  if 
$$
\begin{array}{ccc}
F^2-1\in\ell^p({\cal H}), & F^*-F\in\ell^p({\cal H}), & \{g\mapsto \rho(g)F\rho(g)^{-1}\,-\,F\}\,\in\,C(G,\ell^p({\cal H})).\\
\end{array}
$$
\end{definition}

Denote by $Fred(G)$ and $Fred^{(p)}(G)$, respectively, the set of
Fredholm representations and $p$-summable Fredholm representations of $G$, respectively. 

If $G=\Gamma$ is a discrete group, there is a tautological bijection
$$
\begin{array}{ccc}
Fred^{(p)}(\Gamma) & \overset{\simeq}{\longrightarrow} & 
{\cal E}_0^{(p)}((C^*_{max}(\Gamma),\C\Gamma),\C) \\
\end{array}
\eqno(2.8)
$$
between the set of $p$-summable Fredholm representations of $\Gamma$ and the
set of even Fredholm modules over the enveloping $C^*$-algebra
$C^*_{max}(\Gamma)$ of $\ell^1(\Gamma)$, which are $p$-summable over
$\C\Gamma$.

There are obvious notions of unitary equivalence, degeneracy, and operator
homotopy of Fredholm representations. For finitely summable representations 
one puts
\begin{definition}
A smooth operator homotopy between $p$-summable Fredholm representations 
${\cal E}_0=({\cal H},\rho,F_0)$ and ${\cal E}_1=({\cal H},\rho,F_1)$ 
is given by a family $F_t,\,t\in[0,1]$
of bounded operators on $\cal H$ connecting $F_0$ and $F_1$ such that
\begin{itemize}
\item ${\cal E}_t=({\cal H},\rho,F_t)\in 
Fred^{(p)}(G),\,0\leq t\leq 1$,
\item $\{t\mapsto F_t\}\in{\cal C}^\infty([0,1],{\cal L(H)})$,
\item $\{g\mapsto \rho(g)\,F_t\,\rho(g)^{-1}\,-\,F_t\}\in 
C(G,{\cal C}^\infty([0,1],\ell^p{\cal(H)}))$.
\end{itemize}
\end{definition}

Kasparov defines the $G$-equivariant $K$-homology of $\C$ as
$$
\begin{array}{ccc}
KK_G(\C,\C) & = & Fred(G)/\sim \\
\end{array}
\eqno(2.9)
$$
as the group of equivalence classes of Fredholm representations of $G$ with
respect to the equivalence relation generated by unitary equivalence, addition
of degenerate representations, and operator homotopy.

The smooth version of this is 
\begin{definition}
Let $G$ be a locally compact group and let $1\leq p<\infty$. The $p$-summable,
$G$-equivariant $K$-homology of $\C$ 
$$
\begin{array}{ccc}
{\cal KK}_G^{(p)}(\C,\C) & = & Fred^{(p)}(G)/\sim \\
\end{array}
\eqno(2.10)
$$
is defined as the group of equivalence classes of $p$-summable Fredholm
representations of $G$ with respect to the equivalence relation generated by
unitary equivalence, addition of degenerate representations, and smooth
operator homotopy.
\end{definition}

If $G=\Gamma$ is discrete there are tautological isomorphisms
$$
\begin{array}{ccc}
KK_\Gamma(\C,\C) & \overset{\simeq}{\longrightarrow} & KK(C^*_{max}(\Gamma),\C)\\ 
\end{array}
\eqno(2.11)
$$
and
$$
\begin{array}{ccc}
{\cal KK}_{\Gamma}^{(p)}(\C,\C) & \overset{\simeq}{\longrightarrow} & 
{\cal KK}^{(p)}((C^*_{max}(\Gamma),\C\Gamma),\C)\\
\end{array}
\eqno(2.12)
$$

The Kasparov product turns the (smooth) equivariant $K$-groups (2.9) and
(2.10) into unital, associative rings. The tautological isomorphisms 
(2.11) and (2.12) become than ring isomorphisms.

Finally there is a natural ring homomorphism
$$
\begin{array}{ccc}
R(G) & \longrightarrow & 
KK_G(\C,\C)\\
\end{array}
\eqno(2.13)
$$
from the representation ring of $G$ to the $G$-equivariant $K$-homology 
of $\C$, which assigns to a virtual finite dimensional unitary representation 
$[\rho_+]-[\rho_-]$ the Fredholm representation ${\cal
E}_G\,=\,(\varrho_+\oplus\varrho_-,0)$. This homomorphism factors through the
smooth equivariant $K$-homology rings ${\cal KK}_G^{(p)}(\C,\C)$ for all $1\leq
p<\infty$.

\section{Results} 
In the sequel we understand by a {\bf higher rank group} the group 
of $k$-rational points of a Zariski-connected, simple, isotropic algebraic
group of $k$-rank at least two over the local field $k$.

\begin{theorem}
Let $G$ be a finite product of higher rank groups. Then
$$
\begin{array}{ccc}
{\cal KK}_G^{(p)}(\C,\C) & \simeq & \Z \\
\end{array}
\eqno(3.1)
$$
is generated by the trivial representation of $G$ for $1<p<\infty$.
\end{theorem}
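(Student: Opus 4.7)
The plan is to follow the strategy outlined in the introduction: interpret the summability data of a Fredholm representation as a $1$-cocycle with values in the noncommutative $L^p$-space $\ell^p({\cal H})$, apply a Bader--Furman--Gelander--Monod type vanishing theorem to write the cocycle as a coboundary, and then exploit this coboundary to homotope the representation to a genuinely $\rho$-equivariant one, whose class is governed by a finite-dimensional virtual representation of $G$.

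More concretely, given $({\cal H},\rho,F) \in Fred^{(p)}(G)$, the last summability condition defining $Fred^{(p)}(G)$ says precisely that the map
$$c:G \to \ell^p({\cal H}), \qquad c(g) \,=\, \rho(g)F\rho(g)^{-1} - F,$$
is a norm-continuous $1$-cocycle for the isometric action of $G$ on $\ell^p({\cal H})$ by conjugation through the unitary representation $\rho$. I would then invoke the noncommutative analogue of Theorem 1.2, namely $H^1_{cont}(G,\ell^p({\cal H})) = 0$ for $1<p<\infty$, which is the adaptation of BFGM to the Schatten ideal $\ell^p({\cal H})$ --- a uniformly convex noncommutative $L^p$-space. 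This produces $T \in \ell^p({\cal H})$ with $c(g) = \rho(g)T\rho(g)^{-1} - T$ for all $g\in G$, and the straight-line path $F_t := F - tT$, $t\in[0,1]$, defines a smooth operator homotopy inside $Fred^{(p)}(G)$ (all three summability conditions being preserved along $F_t$ by the ideal property of $\ell^p({\cal H})$) from $F$ to an endpoint $F_1 = F - T$ that commutes with $\rho$.

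Having arrived at a $\rho$-equivariant operator $F_1$, I would next normalise it within the $\rho$-equivariant Fredholm representations. A first $\ell^p$-perturbation homotopy replaces $F_1$ by $\halb(F_1 + F_1^*)$, which is self-adjoint, odd, and still $\rho$-equivariant. Since $F_1^2 - 1 \in \ell^p({\cal H}) \subset {\cal K(H)}$, the operator $F_1^2$ is invertible modulo compacts, so $\ker F_1$ is finite-dimensional and, being $\rho$-invariant, splits off a genuine finite-dimensional unitary $G$-subrepresentation. Smooth functional calculus (for instance along $F_1 \mapsto F_1(F_1^2 + \varepsilon)^{-1/2}$, $\varepsilon \to 0$, applied on $(\ker F_1)^\perp$) provides a $\rho$-equivariant smooth homotopy to the partial isometry $U := F_1|F_1|^{-1} \oplus 0$. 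The restriction of $({\cal H},\rho,U)$ to $(\ker F_1)^\perp$ is then a degenerate Fredholm representation ($U$ self-adjoint, $U^2 = 1$, commuting with $\rho$), while the restriction to $\ker F_1$ is exactly the finite-dimensional virtual unitary representation $[\rho^+|_{\ker F_1^+}] - [\rho^-|_{\ker F_1^-}]$ of $G$.

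The conclusion will then follow from the classical rigidity fact that every finite-dimensional unitary representation of a simple non-compact higher rank Lie group factors through the trivial representation, and likewise for a finite product of such; the residual virtual representation is therefore a multiple $n\cdot[\mathbf{1}]$, yielding ${\cal KK}_G^{(p)}(\C,\C) \simeq \Z$ generated by the class of the trivial representation via the map (2.13). The main work, and the only real obstacle in this plan, lies in the first step: extending BFGM's cohomological vanishing from $L^p(X,\mu)$ to the noncommutative Schatten space $\ell^p({\cal H})$. Once Section 4 supplies this input, the remaining operator-homotopy bookkeeping is essentially routine.
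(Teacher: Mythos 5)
Your proposal follows the paper's proof essentially step for step: reinterpret the equivariance defect $g\mapsto\rho(g)F\rho(g)^{-1}-F$ as a continuous cocycle in $Z^1_{cont}(G,\ell^p({\cal H}))$, invoke the Schatten-ideal analogue of the BFGM vanishing theorem to produce a primitive $T\in\ell^p({\cal H})$, use the linear homotopy $F-tT$ to reach a strictly $\rho$-equivariant operator (this is the content of Proposition 5.10.i), normalise to a self-adjoint $F_1$, split off the finite-dimensional $\rho$-invariant subspace $\ker F_1$, homotope the restriction to $(\ker F_1)^\perp$ to a degenerate module by $\rho$-equivariant functional calculus, and finally reduce to the rigidity of finite-dimensional unitary representations. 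The only point worth flagging is the last step: you invoke the classical fact for simple non-compact higher rank real Lie groups, but Theorem 3.1 allows $G$ to be a product of groups $G(k)$ of $k$-rational points over arbitrary local fields $k$; the paper handles this uniformly via Moore's ergodicity theorem, arguing that a nontrivial finite-dimensional unitary representation without fixed vectors would force $g\mapsto\det\rho'(g)$ to vanish at infinity, contradicting $\lvert\det\rho'(g)\rvert\equiv 1$.
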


\begin{theorem}
Let $\Gamma$ be a lattice (i.e. a discrete subgroup of finite
covolume) in a finite product of higher rank groups. 
Then the tautological homomorphisms
$$
\begin{array}{ccccc}
R(\Gamma) & \overset{\simeq}{\longrightarrow} & {\cal KK}_\Gamma^{(p)}(\C,\C)
&  \overset{\simeq}{\longrightarrow} & {\cal
KK}^{(p)}_0((C^*_{max}(\Gamma),\C\Gamma),\C) \\
\end{array}
\eqno(3.2)
$$
are isomorphisms and 
$$
\begin{array}{ccc}
{\cal KK}^{(p)}_1((C^*_{max}(\Gamma),\C\Gamma),\C) & = & 0\\
\end{array}
\eqno(3.3)
$$
for $1<p<\infty$.
\end{theorem}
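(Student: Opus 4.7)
The argument follows the same philosophy as Theorem 3.1: use the (noncommutative analogue of the) Bader--Furman--Gelander--Monod vanishing theorem, now applied directly to the lattice $\Gamma$, to rigidify the operator $F$. The new points are (a)~keeping track of the kernel/cokernel as a finite--dimensional virtual representation, and (b)~separating the even and odd cases.

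Starting with an even $p$--summable Fredholm module ${\cal E}=({\cal H}^\pm,\rho^\pm,F)$ over $(C^*_{max}(\Gamma),\C\Gamma)$, I would first form the map
$$
c:\Gamma\longrightarrow\ell^p({\cal H}),\qquad c(\gamma)=\rho(\gamma)F\rho(\gamma)^{-1}-F.
$$
Summability of $[F,\rho(\gamma)]$ and unitarity of $\rho(\gamma)$ put $c(\gamma)$ in $\ell^p({\cal H})$, and a short computation shows $c(\gamma_1\gamma_2)=c(\gamma_1)+\mathrm{Ad}_{\rho(\gamma_1)}c(\gamma_2)$. So $c$ is a 1--cocycle for the conjugation action $T\mapsto\rho(\gamma)T\rho(\gamma)^{-1}$ of $\Gamma$ on $\ell^p({\cal H})$, which is isometric because unitary conjugation preserves the Schatten $p$--norm. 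The noncommutative version of Theorem 1.2 recalled in Section 4 gives $H^1(\Gamma,\ell^p({\cal H}))=0$, so $c$ is a coboundary: there is $T\in\ell^p({\cal H})$ with $c(\gamma)=\rho(\gamma)T\rho(\gamma)^{-1}-T$, equivalently $F-T$ commutes exactly with $\rho(\Gamma)$. The straight--line path $F_s=F-sT$ is a smooth operator homotopy in the sense of Definition 2.4, since $T\in\ell^p({\cal H})$ and $\rho(\C\Gamma)$ acts by bounded operators.

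Next I would symmetrize $F-T$ and apply spectral functional calculus (through further smooth operator homotopies which stay inside $\ell^p$ because everything at play commutes with $\rho(\Gamma)$) to arrive at a unitarily equivalent module in which $F$ is self--adjoint, satisfies $F^2=1$, and commutes strictly with $\rho$. Writing $F=\bigl(\begin{smallmatrix} 0 & T^* \\ T & 0\end{smallmatrix}\bigr)$ with $T:{\cal H}^+\to{\cal H}^-$ a $\Gamma$--equivariant operator, the condition $F^2-1\in\ell^p$ becomes $T^*T-1,\,TT^*-1\in\ell^p$, so $T$ is a $\Gamma$--equivariant Fredholm operator and its polar decomposition provides a $\Gamma$--equivariant partial isometry which identifies the orthogonal complements of $\ker T$ and $\ker T^*$ as $\Gamma$--representations; this part of the module is degenerate. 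What remains is supported on the finite--dimensional subspaces $\ker T$ and $\ker T^*$ (finite--dimensionality comes from $T^*T-1\in\ell^p$, which forces $0$ to be an isolated spectral value of finite multiplicity), and there $F=0$. Thus ${\cal E}$ is homotopic to the Fredholm representation associated with the virtual $\Gamma$--representation $[\ker T]-[\ker T^*]\in R(\Gamma)$, proving surjectivity of the first arrow in (3.2). The second arrow is the tautological bijection (2.12). For the odd case ${\cal KK}^{(p)}_1$, the same reductions end with $F=F^*$, $F^2=1$, $[F,\rho]=0$ and no grading constraint, so the resulting module is literally degenerate and represents $0$, giving (3.3).

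For injectivity of $R(\Gamma)\to{\cal KK}_\Gamma^{(p)}(\C,\C)$, the equivariant index map ${\cal E}\mapsto[\ker T]-[\ker T^*]$ just extracted furnishes a left inverse of the representation--ring map; well--definedness on equivalence classes is checked in the standard way, using that unitary equivalence preserves the index, degenerate modules contribute trivially (as $T$ is then an honest unitary intertwiner), and smooth operator homotopies preserve the equivariant Fredholm index by continuity of the index on $\ell^p$--perturbations. I expect the main obstacle to be the input $H^1(\Gamma,\ell^p({\cal H}))=0$: transferring the Bader--Furman--Gelander--Monod result from classical $L^p$--spaces to the conjugation action on Schatten classes is not formal, because $\ell^p({\cal H})$ is only uniformly convex and the relevant representations are not of the $L^p(X,\mu)$ type considered in \cite{BFGM}. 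Once that extension is supplied in Section 4, everything above is a sequence of routine $K$--theoretic manipulations.
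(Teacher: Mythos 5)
Your overall strategy for surjectivity (rigidify $F$ via the BFGM-type vanishing of $H^1(\Gamma,\ell^p(\cal H))$, then peel off a degenerate module and land on the finite-dimensional kernel of the resulting $\Gamma$-equivariant Fredholm operator) coincides with the paper's, which refers back to the proof of Theorem~3.1 for this part. Where you genuinely diverge from the paper is the injectivity of $R(\Gamma)\to{\cal KK}_\Gamma^{(p)}(\C,\C)$: you build an explicit left inverse, the equivariant index ${\cal E}\mapsto[\ker T]-[\ker T^*]\in R(\Gamma)$, and justify well-definedness by asserting homotopy invariance of this index, whereas the paper argues that Proposition~5.10\,ii) reduces the homotopy to one of finite-dimensional virtual unitary representations and then invokes \emph{Kazhdan's property~(T)} (finite-dimensional unitary representations are isolated in the unitary dual, so the homotopy is constant). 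These are both viable, but they rest on different pillars. Your route is arguably more elementary and more informative (it produces the inverse map, rather than just proving injectivity), \emph{provided} you actually establish local constancy of the $\Gamma$-equivariant index for a norm-continuous family of $\Gamma$-equivariant Fredholm operators; ``continuity of the index on $\ell^p$-perturbations'' is the right slogan, but for the index valued in $R(\Gamma)$ this is not the textbook integer index statement and deserves a real argument (e.g.\ stabilize by $\iota:\ker T_{t_0}^*\into{\cal H}^-$ to make $\tilde T_t$ surjective near $t_0$, note that $\ker\tilde T_t$ is then a norm-continuous family of finite-dimensional $\Gamma$-invariant subspaces of a fixed $\Gamma$-module, and use the equivariant projections $P_t|_{\ker\tilde T_{t_0}}$ to identify them). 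The paper's appeal to property~(T) is a heavier hammer but is natural in the higher-rank context, since property~(T) is already in play; it also sidesteps the need to formalize the equivariant index.

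Two minor gaps you should fill. First, in the odd case you claim the module becomes ``literally degenerate'' after the reductions, but you will still be left with a nonzero finite-dimensional $\ker F$ on which $F=0$; there you need one more homotopy, say $F_s=s\cdot 1$ on $\ker F$, to reach a genuinely degenerate module before you can conclude (3.3). Second, well-definedness of the index map also requires checking the degenerate and unitary-equivalence moves \emph{after} the rigidification of Proposition~5.10, since the index is only defined once $F$ commutes strictly with $\rho(\Gamma)$ — that is, one must verify that any two rigidifications of the same module are connected by a homotopy of strictly commuting operators, which is exactly the content of Proposition~5.10\,ii). You gesture at this (``checked in the standard way'') but it is worth making explicit, as it is the place where the rigidity input is used twice.
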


\begin{theorem}
Let $\Gamma$ be a lattice in a finite product of higher rank groups.
 Then
$$
\begin{array}{ccc}
{\cal KK}^{(p)}_*((C^*_{red}(\Gamma),\C\Gamma),\C) & = & 0 \\
\end{array}
\eqno(3.4)
$$
for $1<p<\infty$, where $C^*_{red}(\Gamma)$ denotes the reduced $C^*$-algebra
of $\Gamma$, i.e. the enveloping $C^*$-algebra of the regular representation of
$\Gamma$. 
\end{theorem}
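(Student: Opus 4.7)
The plan is to reduce Theorem~3.3 to Theorem~3.2 by pulling back along the canonical surjection $\pi\colon C^*_{max}(\Gamma)\to C^*_{red}(\Gamma)$ and then eliminating the residual virtual finite-dimensional representation by means of Fell's absorption principle.

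Since $\pi$ is the identity on the dense subalgebra $\C\Gamma$, every $p$-summable Fredholm module ${\cal E}=({\cal H},\rho,F)$ over $(C^*_{red}(\Gamma),\C\Gamma)$ pulls back to a $p$-summable module $\pi^*{\cal E}$ over $(C^*_{max}(\Gamma),\C\Gamma)$, inducing a natural map
$$
\pi^*\colon {\cal KK}^{(p)}_*((C^*_{red}(\Gamma),\C\Gamma),\C)\,\longrightarrow\,{\cal KK}^{(p)}_*((C^*_{max}(\Gamma),\C\Gamma),\C).
$$
The Schatten-class conditions in Definitions~2.1 and~2.3 only constrain $\rho|_{\C\Gamma}$, so a smooth operator homotopy in $C^*_{max}(\Gamma)$ between two modules whose representations already factor through $C^*_{red}(\Gamma)$ is automatically a smooth operator homotopy in $C^*_{red}(\Gamma)$. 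By Theorem~3.2, $[\pi^*{\cal E}]=[\rho_+]-[\rho_-]$ for some virtual finite-dimensional unitary representation $[\rho_+]-[\rho_-]\in R(\Gamma)$ in degree zero, and $[\pi^*{\cal E}]=0$ in degree one. Inspection of the proof of Theorem~3.2 (where $F$ is deformed to a finite-rank operator by applying the Bader-Furman-Gelander-Monod vanishing theorem to the Schatten bimodule $\ell^p({\cal H})$ with conjugation action $g\cdot T=\rho(g)T\rho(g)^{-1}$) shows that the representations $\rho_\pm$ are sub-representations of $\rho$, realised on finite-dimensional $\rho$-invariant subspaces of ${\cal H}$. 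In particular, $\rho_\pm$ factor through $C^*_{red}(\Gamma)$.

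The conclusion then follows from Fell's absorption principle $\lambda_\Gamma\otimes\sigma\simeq\lambda_\Gamma^{\dim\sigma}$, valid for any finite-dimensional unitary representation $\sigma$ of $\Gamma$. If a nonzero such $\sigma$ is weakly contained in $\lambda_\Gamma$, then $\sigma\otimes\bar\sigma$ is weakly contained in $\lambda_\Gamma\otimes\bar\sigma\simeq\lambda_\Gamma^{\dim\sigma}$; but $\sigma\otimes\bar\sigma$ always contains the trivial representation of $\Gamma$, forcing the trivial representation to be weakly contained in $\lambda_\Gamma$. This contradicts the non-amenability of the higher rank lattice $\Gamma$ (which has Kazhdan's property (T)). Hence $\rho_+=\rho_-=0$, and the equivalence witnessing $[\pi^*{\cal E}]=0$ --- being built entirely from sub-representations of $\rho$ together with genuinely degenerate modules of the same kind --- descends to an equivalence in $(C^*_{red}(\Gamma),\C\Gamma)$. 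Therefore $[{\cal E}]=0$.

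The main obstacle is the bookkeeping step of verifying that the chain of degenerate modules, unitary equivalences, and smooth operator homotopies furnished by the proof of Theorem~3.2 can be organised so that every representation appearing in it factors through $C^*_{red}(\Gamma)$. This is plausible because the finite-dimensional representations extracted by Theorem~3.2 sit inside ${\cal H}$ itself rather than being adjoined from outside, but it requires a careful re-reading of the proof of Theorem~3.2.
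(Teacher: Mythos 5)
Your plan is fundamentally sound and shares the two key ingredients with the paper's proof---reduce to a finite-dimensional representation that factors through $C^*_{red}(\Gamma)$, then kill it using nonamenability---and your Fell-absorption argument is exactly the standard proof of the fact the paper cites from Dixmier, 18.9.5. But the detour through $\pi^*$ and Theorem~3.2 creates the descent problem you yourself flag at the end, and that problem is avoidable from the start.

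The point you are missing is that Proposition~5.10, and the cocycle-vanishing theorems 5.8/5.9 feeding into it, never see the ambient $C^*$-algebra at all: their hypotheses involve only the unitary representation $\rho|_\Gamma$ of the discrete group and the Schatten conditions on $\C\Gamma$. They therefore apply verbatim, and produce a smooth operator homotopy \emph{inside} ${\cal E}_*^{(p)}((C^*_{red}(\Gamma),\C\Gamma),\C)$, to a module $({\cal H},\rho,F')$ with $[F',\rho(\Gamma)]=0$. Running the rest of the argument of Theorem~3.1 (make $F'$ self-adjoint, split off $\ker F'$, homotope the complementary piece to a degenerate module---all steps use only restrictions of $\rho$ to $\rho$-invariant subspaces) one lands directly on $({\cal H}',\rho|_{{\cal H}'},0)$ with ${\cal H}'$ finite-dimensional, as an equivalence entirely within the reduced $\mathcal{KK}^{(p)}$-groups. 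Since $\rho|_{{\cal H}'}$ is by construction a finite-dimensional representation of $C^*_{red}(\Gamma)$ and $\Gamma$ is nonamenable, it is zero, and the class vanishes. The pullback $\pi^*$, its injectivity, and the ``descent'' of a chain of equivalences are simply never needed. Your bookkeeping step is in fact verifiable (every representation appearing in the chain of Theorem~3.1 is a restriction of $\rho$), so your proof can be completed, but the direct application of Proposition~5.10 over $C^*_{red}(\Gamma)$ is both shorter and dispels the worry at the root.
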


\begin{cor}
Under the assumptions of 3.3 no nonzero class in the $K$-homology group $KK_*(C^*_{red}(\Gamma),\C)$ 
can be represented by a Fredholm module which is finitely summable over $\C\Gamma$.
\end{cor}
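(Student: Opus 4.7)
The plan is to deduce the corollary directly from Theorem 3.3 by using the tautological forgetful map (2.4) from smooth $p$-summable $K$-homology to ordinary $K$-homology, together with the standard monotonicity of Schatten ideals.

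Suppose a class $\kappa\in KK_*(C^*_{red}(\Gamma),\C)$ is represented by a Fredholm module $\Eh=(\Hh^\pm,\rho^\pm,F)$ over $C^*_{red}(\Gamma)$ which is $p$-summable over $\C\Gamma$ for some $1\leq p<\infty$. Because $\ell^p(\Hh)\subset\ell^q(\Hh)$ whenever $p\leq q$, the module $\Eh$ is automatically $q$-summable over $\C\Gamma$ for every $q\geq p$. Choosing any $q$ with $\max(p,1)<q<\infty$ (for instance $q=\max(p,2)$), the module $\Eh$ therefore represents a class $[\Eh]\in{\cal KK}^{(q)}_*((C^*_{red}(\Gamma),\C\Gamma),\C)$.

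By construction, the natural homomorphism
$$
{\cal KK}^{(q)}_*((C^*_{red}(\Gamma),\C\Gamma),\C)\;\longrightarrow\;KK_*(C^*_{red}(\Gamma),\C)
$$
of (2.4) sends $[\Eh]$ to $\kappa$. But Theorem 3.3 asserts that the source of this map vanishes for every $1<q<\infty$. Hence $[\Eh]=0$ already in the smooth group, and therefore $\kappa=0$ in $KK_*(C^*_{red}(\Gamma),\C)$.

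There is essentially no obstacle to overcome: the whole content of the corollary has been packaged into Theorem 3.3. The only small point worth recording is that the case $p=1$ of the hypothesis is reduced to the range $1<p<\infty$ required by Theorem 3.3 by the trivial Schatten inclusion $\ell^1\subset\ell^q$.
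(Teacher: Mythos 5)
Your proposal is correct and takes the route the paper clearly intends: Corollary~3.4 is an immediate consequence of Theorem~3.3 combined with the forgetful map~(2.4), and the paper in fact offers no separate argument for it. Your observation that the monotonicity $\ell^p(\Hh)\subset\ell^q(\Hh)$ (for $p\le q$) reduces the boundary case $p=1$ to the range $1<q<\infty$ covered by Theorem~3.3 is the only non-tautological step, and it is handled correctly; one trivial slip is that your stated condition $\max(p,1)<q$ is not satisfied by your suggested example $q=\max(p,2)$ when $p\ge 2$, but the argument only needs $p\le q$ and $q>1$, which that choice does give.
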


\section{The work of Bader, Furman, Gelander and Monod} 

\subsection{Uniformly convex Banach spaces}

Recall that a Banach space $B$ is {\bf strictly convex} if the midpoint of
every segment joining two points of the unit sphere $S(B)$ is in the interior
of the unit ball. It is {\bf uniformly convex} if the previous condition holds
uniformly in the following sense: for all $\delta>0$ there exists $\epsilon>0$
such that 
$$
\begin{array}{ccc}
\parallel x-y \parallel\,\geq\,\delta & \Rightarrow & 
\parallel\frac{x+y}{2}\parallel\,\leq\,1-\epsilon \\
\end{array}
\eqno(4.1)
$$
for all $x,y\in S(B)$.

If $B$ is uniformly convex, then for every $\xi\in S(B)$ there is a unique $x\in
S(B^*)$ such that $\langle x,\xi\rangle=1$. This assignement defines a uniformly
continuous homeomorphism 
$$
\iota:\,S(B)\longrightarrow S(B^*)
\eqno(4.2)
$$
from the unit sphere of
$B$ to that of $B^*$. Uniformly convex spaces are reflexive. A Banach space is
uniformly smooth if its dual space is uniformly convex.

Let $\rho:G\to {\bf O}(B)$ be an orthogonal (i.e. isometric)
linear representation on a strictly convex real Banach space $B$. 
Then the subspace $B^{\rho(G)}$ of $G$-fixed vectors possesses a canonical
complement  
$$
B'\,=\,\left((B^*)^{\rho_*(G)}\right)^{\ominus}
\eqno(4.3)
$$ 
given by the subspace annihilated by the space $(B^*)^{\rho_*(G)}$ of all
bounded linear functionals on $B$ which are fixed under the contragredient
representation $\rho^*$ of $G$.

The representation $\rho$ splits as direct sum of its restriction to an
orthogonal representation on $B'$ and the trivial representation on
$B^{\rho(G)}$.
$$
B\,\simeq\,B^{\rho(G)}\,\oplus\,B'
\eqno(4.4)
$$ 
\subsection{Isometric actions on uniformly convex spaces}

An isometric action of a locally group $G$ on a Banach space $B$ is called
continuous if the map $\pi:G\times B\longrightarrow B$ is continuous. Every
isometry of a strictly convex space is real affine (it preserves
midpoints of segments). Thus 
$$
\pi(g)v\,=\,\rho(g)v\,+\,\psi(g),\,\,\,\forall g\in G,\,\forall v\in B
\eqno(4.5)
$$
where $\rho:G\longrightarrow{\bf O}(B)$ is a continuous orthogonal
($\R$-linear isometric) representation of $G$ on $B$ and
$\psi:G\longrightarrow B$ is a continuous 1-cocycle on $G$ with values in
the $G$-module $B$: 
$$
\psi\,\in\,Z^1_{cont}(G,\rho)\,=\,\{\psi\in
C(G,B),\,\psi(gh)\,=\,\psi(g)\,+\,\rho(g)\psi(h),\,\forall g,h\in G\}. 
\eqno(4.6)
$$
There is thus a bijection between affine isometric $G$-actions on $B$ with
linear part $\rho$ and continuous 1-cocycles on $G$ with coefficients in
$\rho$. The topology of uniform convergence on compacta turns
$Z^1_{cont}(G,\rho)$ into a Fr\'echet space.

An affine isometric action (with linear part $\rho$) possesses a fixed point
if and only if the corresponding cocycle belongs to the subspace
$$
B^1(G,\rho)\,=\,\{\mu\in Z^1_{cont}(G,\rho),\,\mu(g)\,=\,\xi\,-\,\rho(g)\xi\,\,
\text{for some}\,\,\xi\in B\}. 
\eqno(4.7)
$$
The vanishing of the cohomology group 
$$
H^1_{cont}(G,\rho)\,=\,Z^1_{cont}(G,\rho)\,/B^1(G,\rho)
\eqno(4.8)
$$
is therefore equivalent to the assertion that every continuous isometric
action of $G$ on $B$ with linear part $\rho$ possesses a fixed point.

\begin{lemma}
Suppose that $H^1_{cont}(G,\rho)\,=\,0$ and let $\psi_t,t\in[0,1]$ be a smooth family of cocycles. Then there exists a family $\xi_t\in B,\,t\in[0,1]$ of fixed points of the corresponding affine actions which depends smoothly on $t$.
\end{lemma}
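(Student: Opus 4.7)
The strategy is to realize the assignment $\psi \mapsto \xi$ as the inverse of a continuous linear bijection between Fréchet spaces, whereupon smoothness in the parameter $t$ follows automatically.

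First, I would use the canonical splitting $B = B^{\rho(G)} \oplus B'$ from (4.3)--(4.4) to replace $B$ by its complement $B'$. Define the coboundary map
$$
T:\,B'\,\longrightarrow\,Z^1_{cont}(G,\rho),\qquad T(\xi)(g)\,=\,\xi\,-\,\rho(g)\xi.
$$
The estimate $\sup_{g\in K}\|T(\xi)(g)\|\leq 2\|\xi\|$ for every compact $K\subset G$ shows that $T$ is continuous when $Z^1_{cont}(G,\rho)$ carries its natural Fréchet topology of uniform convergence on compacta. The map $T$ is injective: if $T(\xi) = 0$ then $\xi$ is $\rho(G)$-fixed, so $\xi\in B^{\rho(G)}\cap B' = \{0\}$. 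By definition the image of $T$ equals $B^1(G,\rho)$, which by the hypothesis $H^1_{cont}(G,\rho) = 0$ coincides with all of $Z^1_{cont}(G,\rho)$.

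Thus $T$ is a continuous linear bijection between Fréchet spaces ($B'$ is Banach, hence Fréchet, and $Z^1_{cont}(G,\rho)$ is Fréchet since $G$ is second countable and therefore $\sigma$-compact). The open mapping theorem then yields that $T^{-1}:Z^1_{cont}(G,\rho)\to B'$ is continuous and linear. Setting
$$
\xi_t\,:=\,T^{-1}(\psi_t)\,\in\,B'\,\subset\,B
$$
gives a fixed point of the affine action with linear part $\rho$ and translation part $\psi_t$, because $T(\xi_t) = \psi_t$ means exactly $\psi_t(g) = \xi_t - \rho(g)\xi_t$. Since $T^{-1}$ is continuous and linear, it is smooth, and composition with the smooth curve $t\mapsto\psi_t$ produces a smooth curve $t\mapsto\xi_t$, as required.

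The only nontrivial ingredient is the open mapping theorem, and the point at which one must be careful is the verification that $Z^1_{cont}(G,\rho)$ is genuinely a Fréchet space (completeness is inherited from $C(G,B)$ with the compact-open topology, using an exhaustion by compacta) and that the cocycle equation defines a closed subspace of $C(G,B)$; both are routine. Once this is in place, the remainder of the argument is purely formal.
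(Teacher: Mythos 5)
Your proof is correct and follows essentially the same route as the paper's. Both arguments use the exact sequence $0\to B^{\rho(G)}\to B\overset{p}{\to}B^1(G,\rho)\to 0$ together with the canonical splitting (4.4), identify $B^1=Z^1_{cont}(G,\rho)$ from the vanishing hypothesis so that the target is complete, and conclude that $p$ restricted to $B'$ has a continuous linear inverse; you simply make explicit the appeal to the open mapping theorem, which the paper leaves implicit in the phrase ``splits according to (4.4).''
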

\begin{proof}
There is a canonical exact sequence 
$$
\begin{array}{cccccccccc}
0 & \to & B^{\rho(G)} & \to & B & \overset{p}{\to} & B^1(G,\rho) & \to & 0 \\
\end{array}
\eqno(4.9)
$$
of abstract vector spaces where $p$ assigns to a vector $\xi\in B$ the cocycle\\ $p(\xi):\,g\mapsto\xi-\rho(g)\xi$. Under our hypothesis $B^1(G,\rho)\,=\,Z^1_{cont}(G,\rho)$ and is thus complete. Therefore (4.9) becomes an exact sequence of Fr\'echet spaces which splits according to (4.4). In particular, the projection $p$ possesses a bounded linear section which assigns to every cocycle a fixed point of the corresponding affine action. This implies the lemma because bounded linear maps are smooth. 
\end{proof}

The result of Bader, Furman, Gelander and Monod that we wish to extend to
noncommutative $\ell^p$-spaces is the

\begin{theorem}\cite{BFGM}
Let $G$ be a product of higher rank groups or a lattice in such a group (see section 3). Let $(X,\mu)$ be a
measure space and let $\rho$ be any orthogonal representation of $G$ on the
Banach space $L^p(X,\mu)$ where $1<p<\infty$. Then $H^1_{cont}(G,\rho)=0$.
\end{theorem}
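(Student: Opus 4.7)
The plan is to follow the approach of \cite{BFGM}. First, using the canonical splitting (4.4) of section 4.1, I reduce to the case where $\rho$ has no nonzero $G$-fixed vector, so that the desired vanishing becomes equivalent (via (4.7)--(4.8)) to the affine fixed-point assertion: every continuous isometric affine $G$-action on $L^p(X,\mu)$ with linear part $\rho$ admits a global fixed point.

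Next I would invoke the classical Banach--Lamperti theorem: for $p\neq 2$, every linear isometry of $L^p(X,\mu)$ is a weighted composition operator built from a nonsingular point transformation of $(X,\mu)$. Hence the data of the orthogonal $L^p$-representation $\rho$ are essentially equivalent to a measure-class preserving $G$-action on $(X,\mu)$ twisted by a measurable sign cocycle, and this action simultaneously produces a companion unitary representation $\rho_2$ on $L^2(X,\mu)$. The Mazur map $M_p\colon L^2\to L^p$, $f\mapsto\sign(f)|f|^{2/p}$, is then a $G$-equivariant uniform homeomorphism between the unit spheres $S(L^2)$ and $S(L^p)$, providing a nonlinear but equivariant bridge between the two representations.

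For $G$ a product of higher rank groups, I then appeal to Kazhdan's property $(T)$, which gives $H^1_{cont}(G,\rho_2)=0$ for every unitary $\rho_2$ without invariant vector, equivalently a uniform spectral gap on $S(L^2)$. Transporting this gap to $S(L^p)$ via $M_p$ and combining it with the uniform convexity of $L^p$, every bounded $\rho$-affine orbit admits a unique circumcenter in $L^p$ (as a minimizer of a uniformly convex functional), and by equivariance this circumcenter is a fixed point of the affine action. For the lattice case, one uses the standard cohomological induction argument: a $\Gamma$-cocycle with coefficients in $\rho$ induces a $G$-cocycle with coefficients in the $L^p$-induced representation on $L^p(G/\Gamma,L^p(X,\mu))$ (still of $L^p$-type on a standard measure space), to which the group case applies, and one restricts the resulting fixed vector back to $\Gamma$.

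The main obstacle is the transfer step across the nonlinear Mazur map. Because $M_p$ does not respect the affine structure, there is no tautological correspondence between fixed points of affine actions on $L^2$ and on $L^p$, and one must instead argue intrinsically on $L^p$. The detailed approach of \cite{BFGM} therefore introduces a Banach-space property $(T_B)$ and establishes, via an ultralimit-based center-of-mass construction that exploits uniform convexity, the equivalence between $(T_B)$, vanishing of $H^1_{cont}(G,\rho)$ for representations without invariant vector, and a fixed-point property for affine actions. The verification of property $(T_{L^p})$ for higher rank $G$ is then carried out by a careful analysis of root subgroups combined with the Mazur-map transfer from the Hilbert space case, which is the technical heart of their paper.
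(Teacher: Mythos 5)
Note first that the paper itself does not prove this statement: Theorem 4.2 is a citation to \cite{BFGM}, and the only comment the paper offers is that \cite{BFGM} deduce it from the more general Theorem 4.3 (vanishing of $H^1_{cont}$ for orthogonal representations on uniformly convex, uniformly smooth Banach spaces that admit a $G$-equivariant, uniformly continuous homeomorphism of unit spheres onto the sphere of a unitary Hilbert-space representation). Your proposal is therefore not really compared against a proof in the paper; it is a synopsis of the actual \cite{BFGM} argument. Much of its skeleton is right and consistent with the paper's pointer to Theorem 4.3: the Banach--Lamperti description of $L^p$-isometries to manufacture a companion unitary $L^2$-representation, the Mazur map as the equivariant uniform sphere-homeomorphism, property $(T)$ as the Hilbert-space input, and $L^p$-induction for the lattice case.

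The genuine gap is in your middle step, and you half-admit it yourself. The circumcenter of a \emph{bounded} orbit of an affine isometric action on a uniformly convex Banach space is always a fixed point; no spectral gap and no property $(T)$ is used there. The entire difficulty is to prove that affine orbits (equivalently, cocycles) are bounded, and your proposal never does this. ``Transporting the spectral gap from $S(L^2)$ to $S(L^p)$ via $M_p$'' cannot yield boundedness of an affine orbit, because the Mazur map is a homeomorphism between unit spheres of the \emph{linear} representations and is blind to the translation part $\psi$ of the affine action; the affine orbit does not even lie on a sphere. Your final paragraph correctly names the machinery that replaces this step in \cite{BFGM} -- the property $(T_B)$ framework, the ultralimit center-of-mass construction needed to pass from reduced to unreduced cohomology, and the reduction to root subgroups -- but that paragraph is a description of what would have to be done rather than an argument. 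So the one step you argue in detail does not hold as written, and the step that actually carries the theorem is deferred to the source, exactly as the paper does by citing it.
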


It should be mentioned that Bader, Furman, Gelander and Monod conjecture 
that any affine isometric action of a higher rank group or lattice on a
uniformly convex Banach space has a fixed point. In fact they deduce
the previous theorem from the following result which has a more general
flavor.

\begin{theorem}\cite{BFGM}
Let $G$ be a product of higher rank groups (see section 2) and let $\rho_B$
be an orthogonal representation of $G$ on a uniformly convex and uniformly smooth Banach space $B$.
Suppose that there exists an orthogonal representation $\rho$ of $G$ on a
Hilbert space $\cal H$ and a homeomorphism $\Phi:S(B)\longrightarrow S({\cal
H})$ between the unit spheres of $B$ and $\cal H$ such that
\begin{itemize}
\item $\Phi$ intertwines the $G$-actions induced by $\rho_B$ and $\rho$.
\item $\Phi$ and $\Phi^{-1}$ are uniformly continuous.
\end{itemize}
 Then $$H^1_{cont}(G,\rho_B)=0.$$
\end{theorem}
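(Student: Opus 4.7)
My plan is to transfer the vanishing of $H^1_{cont}$ from the Hilbert representation $\rho$ -- where it follows from Kazhdan's property (T) for higher rank groups -- back to the Banach representation $\rho_B$, using the uniformly continuous equivariant sphere intertwiner $\Phi$ as the bridge.

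First I would split $B = B^{\rho_B(G)} \oplus B'$ via the canonical decomposition (4.4). Continuous homomorphisms from a product of higher rank simple groups (or from a higher rank lattice) into a normed vector space vanish -- the ambient Lie groups are perfect, and the lattices have finite abelianization by Margulis -- so the trivial-action summand $B^{\rho_B(G)}$ contributes nothing to $H^1_{cont}$. It therefore suffices to prove vanishing assuming $\rho_B$ has no nonzero invariant vectors; equivariance of $\Phi$ then forces the same for $\rho$ on $\mathcal{H}$. Since products of higher rank groups and their lattices are Kazhdan groups, Delorme--Guichardet gives $H^1_{cont}(G, \rho) = 0$, and quantitatively property (T) rules out any sequence of almost $G$-invariant unit vectors in $\mathcal{H}$.

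Next, suppose $\psi \in Z^1_{cont}(G, \rho_B)$ corresponds to a fixed-point-free affine action on $B$. Uniform convexity and uniform smoothness of $B$ ensure, via a Chebyshev-center (Bruhat--Tits) argument, that the orbit $\psi(G)$ is unbounded. I would then use $\Phi$ to push the asymptotic behaviour of this orbit into $\mathcal{H}$: composing unit-normalized orbit directions $u_n = \psi(g_n)/\|\psi(g_n)\|$ (for $\|\psi(g_n)\| \to \infty$) with $\Phi$, and exploiting the intertwining identity $\Phi \circ \rho_B(g) = \rho(g) \circ \Phi$ together with the uniform continuity of both $\Phi$ and $\Phi^{-1}$, one obtains a sequence of unit vectors in $\mathcal{H}$ that are asymptotically $G$-invariant in the Kazhdan sense, contradicting property (T) for $\rho$.

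The main obstacle is the precise extraction of almost invariant Hilbert vectors from the unbounded cocycle orbit. The naive normalization only yields $\rho_B(h)u_n = (\psi(hg_n) - \psi(h))/\|\psi(g_n)\|$, which is close to $u_n$ only if $\|\psi(hg_n) - \psi(g_n)\| = o(\|\psi(g_n)\|)$; this is not automatic. Making the argument rigorous should require passing to an ultralimit of rescalings of the orbit (an asymptotic cone construction), in which uniform convexity and uniform smoothness of $B$ are preserved and the $G$-action extends. There a $G$-invariant direction at infinity can be extracted, and the uniform continuity of $\Phi$, which behaves well under ultralimits, transports it to the Hilbert side to produce the needed almost invariant unit vectors and the contradiction with property (T).
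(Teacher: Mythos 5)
The paper does not prove this theorem: Theorem 4.3 is quoted from Bader--Furman--Gelander--Monod \cite{BFGM} as a black-box input, so there is no internal proof to compare your argument against. Evaluating your proposal on its own, the preliminary reductions are sound -- the splitting $B=B^{\rho_B(G)}\oplus B'$ from (4.4), vanishing of cocycles into the trivial summand since higher rank simple groups are perfect, transfer of fixed vectors across $\Phi$ by equivariance, Delorme--Guichardet on the Hilbert side, and the Chebyshev-center fact that in a uniformly convex space a fixed-point-free isometric action must have unbounded orbits.

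The gap you flag is genuine, and the proposed ultralimit fix does not close it. Your own computation shows $\rho_B(h)u_n-u_n=(\psi(hg_n)-\psi(g_n)-\psi(h))/\|\psi(g_n)\|$, and after discarding the negligible $\psi(h)/\|\psi(g_n)\|$ term this is \emph{identical} to the almost-invariance $(\rho_B(h)-1)u_n\to 0$ you are trying to prove; unboundedness of the orbit alone says nothing about the direction in which it escapes. In the Hilbert case this step is carried by the Gaussian/Fock-space construction underlying Delorme--Guichardet, which has no analogue in a general uniformly convex Banach space, and the asymptotic cone merely reproduces the same problem at infinity: a $G$-fixed direction in the cone exists exactly when the $u_n$ were already asymptotically invariant. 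Moreover, $\Phi$ is a homeomorphism of unit spheres for the \emph{linear} representations $\rho_B$ and $\rho$; it does not intertwine the affine actions, and there is no natural way to push an ultralimit of the affine $G$-space through a sphere map. What $\Phi$ legitimately gives you is the absence of almost-invariant unit vectors in $B'$ (they would transfer to $\mathcal{H}'$ and contradict property (T)); but passing from that linear spectral-gap statement to $H^1_{cont}(G,\rho_B)=0$ is a separate reduced-cohomology argument (Shalom-type, using Howe--Moore decay for higher rank groups), which is the actual content of the BFGM proof and cannot be obtained by normalizing an unbounded cocycle orbit and pushing it through $\Phi$.
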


We will show in the sequel that this theorem applies to various affine
isometric actions of higher rank groups on operator ideals.

\section{Proofs}
\subsection{Geometry of Schatten ideals}
We fix from now on a real number $p,\,1<p<\infty$. The Schatten-von Neumann
ideal $\ell^p({\cal H})\subset{\cal L(H)}$ is the ideal of compact operators
whose sequence of singular values is $p$-summable. It is a Banach space with
respect to the norm 
$$\parallel T\parallel_p\,=\,Trace((T^*T)^{\frac{p}{2}})
\eqno(5.1)
$$
The Schatten spaces are symmetrically normed operator ideals, i.e. one has
$$
\parallel ATB\parallel_p\,\leq\,
\parallel A\parallel_{{\cal L(H)}}\parallel
T\parallel_p\parallel B\parallel_{{\cal L(H)}}
\eqno(5.2)
$$ 
for $A,B\in{\cal L(H)},\,T\in\ell^p({\cal H})$. In particular, if
$\rho:G\longrightarrow{\cal U(H)}$ is a unitary representation of a locally
compact group $G$ on $\cal H$, then 
$$
\rho_p:\,G \,\longrightarrow \, {\cal U}(\ell^p({\cal H})),\,\,
\rho_p(g)(T)\,=\,\rho(g)T\rho(g)^{-1}
\eqno(5.3)
$$
 is a strongly continuous, isometric, $\C$-linear representation of $G$ on
$\ell^p({\cal H})$. 

We will need rather precise information about the geometry of the Banach
spaces $\ell^p({\cal H})$ for $1<p<\infty$. We begin with

\begin{theorem}{\bf (Clarkson-MacCarthy),\cite{Si}}\\
The Banach spaces $\ell^p({\cal H}),\,1<p<\infty$ are uniformly convex.
\end{theorem}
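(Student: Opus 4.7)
The plan is to deduce uniform convexity of $\ell^p({\cal H})$ from the noncommutative Clarkson--McCarthy inequalities. Concretely, for $2 \leq p < \infty$ and $A,B \in \ell^p({\cal H})$ I aim to establish
$$
\parallel A+B\parallel_p^p \,+\, \parallel A-B\parallel_p^p \,\leq\, 2^{p-1}\bigl(\parallel A\parallel_p^p \,+\, \parallel B\parallel_p^p\bigr),
$$
and for $1 < p \leq 2$ the dual inequality (with $1/p + 1/q = 1$)
$$
\parallel A+B\parallel_p^q \,+\, \parallel A-B\parallel_p^q \,\leq\, 2\bigl(\parallel A\parallel_p^p \,+\, \parallel B\parallel_p^p\bigr)^{q-1}.
$$
Once these are in hand, uniform convexity is immediate: normalizing so that $\parallel A\parallel_p = \parallel B\parallel_p = 1$ and assuming $\parallel A-B\parallel_p \geq \delta$, the first inequality (respectively the second) gives a quantitative bound on $\parallel (A+B)/2\parallel_p$ strictly smaller than $1$, so the modulus condition (4.1) holds with an explicit $\varepsilon = \varepsilon(\delta,p) > 0$ depending only on $p$ and $\delta$.

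For the inequality in the range $p \geq 2$ the route I would follow is essentially due to McCarthy. Two ingredients enter: the convexity of the functional $T \mapsto Trace\,|T|^p$ on $\ell^p({\cal H})$ for $p \geq 1$, and the polarization identity $(A+B)^*(A+B) + (A-B)^*(A-B) = 2(A^*A + B^*B)$. Combined with standard trace inequalities for positive operators, which reduce to scalar convexity statements after applying the spectral theorem to $|A+B|$ and $|A-B|$, one extracts the asserted Clarkson estimate. The case $1 < p \leq 2$ is then obtained by trace duality: $\ell^p({\cal H})^* = \ell^q({\cal H})$ under the pairing $(A,B) \mapsto Trace\,(A^*B)$, and the dual Clarkson inequality is a formal consequence of the primal one for $\ell^q({\cal H})$.

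The principal obstacle is precisely the passage from the pointwise scalar inequality $|a+b|^p + |a-b|^p \leq 2^{p-1}(|a|^p + |b|^p)$ to its noncommutative analog. In the commutative $L^p$ setting the inequality is verified pointwise and integrated, but $|A+B|$ and $|A-B|$ are absolute values of non-commuting positive operators, so a naive spectral reduction is unavailable. One must rely instead on genuine tracial convexity, on Lieb--Thirring or Peierls--Bogoliubov type trace inequalities, or alternatively on complex interpolation anchored at the Hilbert-space endpoint $p=2$, where the parallelogram law gives equality and the desired estimate is trivial. Once the noncommutative Clarkson inequality is secured, the derivation of uniform convexity is purely formal and mirrors the classical $L^p$ case.
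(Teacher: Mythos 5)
The paper does not prove this statement at all; it simply cites Simon's monograph \cite{Si}, so there is no ``paper's proof'' to compare against. Your proposal is, in effect, a reconstruction of McCarthy's original argument.

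Your outline for $p\geq 2$ is sound: combine the parallelogram identity $(A+B)^*(A+B)+(A-B)^*(A-B)=2(A^*A+B^*B)$ with the trace superadditivity $\operatorname{Tr}S^r+\operatorname{Tr}T^r\leq\operatorname{Tr}(S+T)^r$ for positive $S,T$ and $r\geq1$, and with the convexity of $S\mapsto\operatorname{Tr}S^r$, to obtain $\|A+B\|_p^p+\|A-B\|_p^p\leq 2^{p-1}(\|A\|_p^p+\|B\|_p^p)$. You also correctly identify the real obstacle: the scalar Clarkson inequality cannot simply be ``integrated'' because $|A+B|$ and $|A-B|$ need not commute, so a genuine operator inequality is required.

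The weak link is your treatment of $1<p<2$. You assert that the dual Clarkson inequality $\|A+B\|_p^q+\|A-B\|_p^q\leq 2(\|A\|_p^p+\|B\|_p^p)^{q-1}$ is ``a formal consequence'' of the primal inequality for $\ell^q(\Hh)$ via trace duality. This is not the case: the duality $\ell^p(\Hh)^*\simeq\ell^q(\Hh)$ transports norms, not $p$-th powers, and the two Clarkson inequalities are not formally dual to one another in the naive sense. Even in the commutative $L^p$ case, Clarkson's second inequality for $1<p<2$ is \emph{not} a one-line corollary of the first one for $L^q$ by duality; Clarkson's own proof was direct, and the now-standard derivations (Boas, and McCarthy in the Schatten setting) go through the Riesz--Thorin or Stein complex interpolation theorem, interpolating between the trivial endpoint $p=1$ and the parallelogram law at $p=2$. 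You do mention interpolation, but only as an ``alternative'' to the tracial convexity argument rather than as the mechanism that actually handles the $1<p<2$ range, and that is where the claimed duality shortcut would break down. Replacing ``trace duality'' with the interpolation argument (or, alternatively, invoking the general fact that a Banach space is uniformly convex iff its dual is uniformly \emph{smooth}, and separately establishing uniform smoothness of $\ell^q$ for $q\geq2$) would close the gap.
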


This generalizes the correponding result for the classical Banach spaces
$L^p(X,\mu)$. From \cite{LT}, 1.e.9.(i) one derives further

\begin{cor}
Let $(X,\mu)$ be a Borel space. Then for $1< p<\infty$ the Banach space
$$
L^p(X,\ell^p({\cal H}))
\eqno(5.4)
$$
obtained from the space $C_c(X,\ell^p({\cal H}))$ of continuous compactly supported functions on $X$ with values in $\ell^p({\cal H})$ by completion with respect to the norm
$$
\parallel f\parallel_p^p\,\,\,=\underset{X}{\int}Trace((f^*(t)f(t))^{\frac{p}{2}})\,dt
\eqno(5.5)
$$ 
is uniformly convex.
\end{cor}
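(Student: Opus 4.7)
The plan is to present $L^p(X,\ell^p({\cal H}))$ as a Bochner-type $L^p$-space with values in the uniformly convex Banach space $\ell^p({\cal H})$ and then invoke the general principle, recorded as Theorem 1.e.9.(i) of \cite{LT}, that Bochner $L^p$-integrals of uniformly convex Banach spaces are again uniformly convex for $1<p<\infty$.

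First I would observe that for $f\in C_c(X,\ell^p({\cal H}))$ the integrand $Trace((f^*(t)f(t))^{p/2})$ is exactly $\parallel f(t)\parallel_{\ell^p({\cal H})}^p$, so the norm $(5.5)$ coincides with the standard Bochner norm $\int_X\parallel f(t)\parallel_{\ell^p({\cal H})}^p\,dt$. Its completion is therefore the classical vector-valued space $L^p(X,\mu;\ell^p({\cal H}))$. By Theorem 5.1 (Clarkson--McCarthy) the target $\ell^p({\cal H})$ is uniformly convex, so the hypothesis of Lindenstrauss--Tzafriri 1.e.9.(i) is satisfied and uniform convexity of $L^p(X,\ell^p({\cal H}))$ follows at once.

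For readers who prefer a self-contained derivation, the same conclusion can be obtained by integrating the Clarkson--McCarthy inequalities in $\ell^p({\cal H})$ over $X$. For $p\geq 2$ one has pointwise
$$
\parallel f(t)+g(t)\parallel_p^p+\parallel f(t)-g(t)\parallel_p^p\,\leq\,2^{p-1}\bigl(\parallel f(t)\parallel_p^p+\parallel g(t)\parallel_p^p\bigr),
$$
and integration yields the corresponding Clarkson inequality for $L^p(X,\ell^p({\cal H}))$, from which uniform convexity is standard. For $1<p<2$ one uses instead the dual Clarkson inequality (or Hanner's inequality) which, being expressed with an exponent $q=p/(p-1)>p$, must be combined with Minkowski's inequality in the right order; this step is the one mild technical obstacle. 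With either approach the result drops out, and it will serve in the next subsection to provide the uniformly convex target spaces on which the Bader--Furman--Gelander--Monod vanishing theorem can be applied to affine isometric $G$-actions arising from smooth operator homotopies.
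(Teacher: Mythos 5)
Your first paragraph is exactly the paper's argument: the norm in (5.5) is the Bochner $L^p$-norm with values in $\ell^p(\mathcal{H})$, which is uniformly convex by Clarkson--McCarthy (Theorem 5.1), so uniform convexity of the vector-valued $L^p$-space follows from Lindenstrauss--Tzafriri 1.e.9.(i). The alternative Clarkson-inequality sketch you append is consistent but superfluous, as the paper, like your main route, simply cites 1.e.9.(i).
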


\begin{lemma}
Let $1<p,q<\infty$ be such that $\frac{1}{p}+\frac{1}{q}=1$. Then
$$
\begin{array}{lr}
\ell^p({\cal H})^*\,\simeq\,\ell^q({\cal H}), & L^p(X,\ell^p({\cal H}))^*\,\simeq\,L^q(X,\ell^q({\cal H})) \\
\end{array}
\eqno(5.6)
$$
in the notations of 5.2.
\end{lemma}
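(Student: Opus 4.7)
The plan is to realise both dualities through the canonical trace pairing and the noncommutative Hölder inequality. For the first identification, I would consider
$$
\Phi:\ell^q({\cal H})\,\longrightarrow\,\ell^p({\cal H})^*,\qquad \Phi(S)(T)\,=\,Trace(TS).
$$
Boundedness with $\parallel\Phi(S)\parallel\,\leq\,\parallel S\parallel_q$ is the noncommutative Hölder inequality $\parallel AB\parallel_1\,\leq\,\parallel A\parallel_p\parallel B\parallel_q$, a consequence of the ideal property (5.2) combined with the standard singular-value majorisation. For the reverse inequality, writing $S=U|S|$ in polar form, I would test $\Phi(S)$ on $T\,=\,|S|^{q-1}U^*/\parallel S\parallel_q^{q-1}$, which has unit $p$-norm and satisfies $Trace(TS)\,=\,\parallel S\parallel_q$; this gives isometry and hence injectivity of $\Phi$.

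For surjectivity, let $\varphi\in\ell^p({\cal H})^*$. The map $(\xi,\eta)\mapsto\varphi(\xi\otimes\eta^*)$, where $\xi\otimes\eta^*$ denotes the rank-one operator $\zeta\mapsto\langle\zeta,\eta\rangle\xi$, is a bounded sesquilinear form on ${\cal H}$, hence represented by a bounded operator $S\in{\cal L}({\cal H})$ with $\varphi(\xi\otimes\eta^*)\,=\,Trace((\xi\otimes\eta^*)S)$. Compressing $S$ to an increasing sequence of finite-dimensional subspaces reduces the $q$-summability of $S$ and the bound $\parallel S\parallel_q\,\leq\,\parallel\varphi\parallel$ to the finite-rank case, where the duality is elementary linear algebra via the polar-decomposition trick of the first paragraph. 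Letting the subspaces exhaust ${\cal H}$ and exploiting monotonicity of Schatten norms under compressions, one concludes $S\in\ell^q({\cal H})$, and $\Phi(S)=\varphi$ follows by continuity from the agreement on the dense subset of finite-rank operators.

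For the Bochner-type statement I would pass through the chain
$$
L^p(X,\ell^p({\cal H}))^*\,\simeq\,L^q(X,\ell^p({\cal H})^*)\,\simeq\,L^q(X,\ell^q({\cal H})),
$$
the second isomorphism being the first half of the lemma applied fibrewise, the first being the classical duality theorem for Lebesgue-Bochner $L^p$-spaces with values in a Banach space whose dual has the Radon-Nikod\'ym property. This applies here because $\ell^p({\cal H})$ is uniformly convex by Theorem 5.1, hence reflexive, so $\ell^q({\cal H})\simeq\ell^p({\cal H})^*$ is also reflexive and has RNP. The composite is then realised by the explicit pairing $\langle f,g\rangle\,=\,\int_X Trace(f(t)g(t))\,dt$, which is well-defined and bounded via the integrated Hölder estimate.

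The main obstacle is the surjectivity step for the Schatten duality: extracting from an abstract functional $\varphi$ an operator $S$ which is genuinely $q$-summable with the correct norm bound, rather than merely bounded. The cleanest route is the finite-dimensional compression argument, which reduces matters to the Hölder-sharpness computation already used in establishing injectivity, and passes to the limit by the monotonicity of Schatten norms under compressions.
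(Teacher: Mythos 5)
The paper states this lemma without giving any proof: it is treated as the standard duality theory for Schatten classes and for Lebesgue--Bochner spaces, with implicit references to \cite{Si} and \cite{LT}. There is therefore no ``paper's own proof'' to compare against, and your proposal should be judged on its own.

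Your argument is the standard textbook route and is essentially correct: the trace pairing together with the noncommutative H\"older inequality gives the bounded map $\ell^q({\cal H})\to\ell^p({\cal H})^*$, the polar-decomposition test element $T=|S|^{q-1}U^*/\parallel S\parallel_q^{q-1}$ shows it is isometric (note $(q-1)p=q$ makes the $p$-norm of $T$ equal to $1$), and reconstructing a representing operator $S$ from the rank-one values $\varphi(\xi\otimes\eta^*)$ handles surjectivity. One point in the surjectivity step is stated a little too glibly: ``monotonicity of Schatten norms under compressions'' on its own yields only the easy inequality $\parallel PSP\parallel_q\leq\parallel S\parallel_q$, which is the wrong direction. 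What you actually need is the converse identity $\parallel S\parallel_q=\sup_P\parallel PSP\parallel_q$ over finite-rank orthogonal projections $P$ (equivalently Horn's characterization $\parallel S\parallel_q=\sup\bigl(\sum_i|\langle S e_i,f_i\rangle|^q\bigr)^{1/q}$ over finite orthonormal systems), which combines with the finite-dimensional duality and the uniform bound $\parallel P_nSP_n\parallel_q\leq\parallel\varphi\parallel$ to give $S\in\ell^q({\cal H})$ with $\parallel S\parallel_q\leq\parallel\varphi\parallel$. The Bochner-space step via the Radon--Nikod\'ym property is correct, the chain uniformly convex $\Rightarrow$ reflexive (Milman--Pettis) $\Rightarrow$ RNP supplying the hypothesis of the vector-valued duality theorem; one should record the standing assumption that $\mu$ is $\sigma$-finite, which is implicit in the paper's use of a fundamental domain for a lattice.
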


In particular, the Schatten ideals $\ell^p({\cal H}),\,1<p<\infty$ are
uniformly convex and uniformly smooth Banach spaces.

Now we will study the polar decomposition in $\ell^p({\cal H})$. The basic
result in this direction is 

\begin{theorem}{\bf (Powers-Stoermer inequality),\cite{PS}}\\
Let $A,B$ be positive trace class operators on $\cal H$. Then
$$
\begin{array}{lcr}
\parallel A^{\frac12} - B^{\frac12}\parallel_2^2 & \leq \,\,\,\parallel
A-B\parallel_1. \\ 
\end{array}
\eqno(5.7)
$$
\end{theorem}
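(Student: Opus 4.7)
The plan is to combine an elementary algebraic identity with the trace-norm duality $\|T\|_1 = \sup\{\mathrm{Tr}(XT) : X^{*}=X,\,\|X\|\leq 1\}$, testing not against the sign of $A-B$ (the naive choice) but against the sign of $H := A^{1/2}-B^{1/2}$. Positivity and trace-class-ness of $A$ and $B$ make $A^{1/2}, B^{1/2}$ Hilbert--Schmidt, so $H$ is a self-adjoint Hilbert--Schmidt operator and every trace rearrangement below is legitimate.

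First I would verify the algebraic identity $A-B = A^{1/2}H + HB^{1/2}$ by expansion. Let $U := \mathrm{sign}(H)$ be the self-adjoint contraction coming from the spectral decomposition of $H$, so that $UH = HU = |H|$. Plugging $U$ into the trace-norm duality and applying cyclicity of the trace to each summand yields
$$
\mathrm{Tr}\bigl(U(A-B)\bigr)\,=\,\mathrm{Tr}(A^{1/2}|H|)+\mathrm{Tr}(B^{1/2}|H|)\,=\,\mathrm{Tr}\bigl((A^{1/2}+B^{1/2})\,|H|\bigr),
$$
and since $\|U\|\leq 1$ this gives
$$
\|A-B\|_1 \,\geq\, \mathrm{Tr}\bigl((A^{1/2}+B^{1/2})|H|\bigr).
$$

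The remaining ingredient is the scalar-type operator inequality $|H| \leq A^{1/2}+B^{1/2}$; this is immediate because $\pm(A^{1/2}-B^{1/2})\leq A^{1/2}+B^{1/2}$ reduces to positivity of $2A^{1/2}$ or $2B^{1/2}$. Conjugating the positive operator $A^{1/2}+B^{1/2}-|H|$ by $|H|^{1/2}$ and taking the trace gives
$$
\mathrm{Tr}\bigl((A^{1/2}+B^{1/2})|H|\bigr) \,\geq\, \mathrm{Tr}(|H|^2) \,=\, \|H\|_2^2 \,=\, \|A^{1/2}-B^{1/2}\|_2^2,
$$
and concatenating the two displayed inequalities finishes the proof.

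The main obstacle I would anticipate is conceptual rather than computational. The natural first attempt is to dualize $\|A-B\|_1$ via the sign of $A-B$ itself and to exploit the Jordan decomposition $A-B = P - N$; this approach reduces the claim to bounding $\mathrm{Tr}(A-P)$ by $\mathrm{Tr}(A^{1/2}B^{1/2})$ using operator monotonicity of the square root, which would require $A - P \geq 0$. Simple $2\times 2$ examples (e.g.\ $A = \bigl(\begin{smallmatrix}2&1\\1&1\end{smallmatrix}\bigr)$, $B = \bigl(\begin{smallmatrix}1/2&1\\1&3\end{smallmatrix}\bigr)$) show that this positivity can fail, so the route is blocked. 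The resolution is to change perspective and factor everything through $H = A^{1/2}-B^{1/2}$ via the identity $A-B = A^{1/2}H + HB^{1/2}$, after which the choice $U = \mathrm{sign}(H)$ together with the bound $|H|\leq A^{1/2}+B^{1/2}$ forces the inequality.
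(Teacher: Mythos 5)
The paper does not prove this statement; it is quoted from Powers and Stoermer, so your argument has to stand on its own. The scaffolding is right: the factorization $A-B = A^{1/2}H + HB^{1/2}$ with $H = A^{1/2}-B^{1/2}$, dualizing against $U = \mathrm{sign}(H)$, and the reduction to $\mathrm{Tr}\bigl((A^{1/2}+B^{1/2})|H|\bigr) \geq \mathrm{Tr}(|H|^2)$ are all correct and in fact reproduce the essence of the original argument. But the last step is wrong. You claim $|H|\leq A^{1/2}+B^{1/2}$ because $\pm H \leq A^{1/2}+B^{1/2}$; for self-adjoint $H$ and positive $C$ the pair of inequalities $\pm H\leq C$ does \emph{not} imply $|H|\leq C$, since the absolute value is not operator monotone. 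Concretely, on $\C^2$ take $A^{1/2}$ to be the rank-one projection onto $\C e_1$ and $B^{1/2}$ the rank-one projection onto $\C(e_1+e_2)$; then $|H|$ is $1/\sqrt 2$ times the identity and $A^{1/2}+B^{1/2}-|H|$ has determinant $1-\sqrt 2<0$, so it is not positive. The ``conjugate by $|H|^{1/2}$'' step is therefore unjustified.

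The trace inequality you actually need is nonetheless true; it is a statement about traces, not about operator order, and it exploits that $|H|$ commutes with $H$. Write $H=H_+-H_-$ with $H_\pm\geq 0$ and $H_+H_-=0$, so $|H|=H_++H_-$, and set $K=A^{1/2}+B^{1/2}$. From $K-H=2B^{1/2}\geq 0$ and $K+H=2A^{1/2}\geq 0$ you get $\mathrm{Tr}\bigl((K-H)H_+\bigr)\geq 0$ and $\mathrm{Tr}\bigl((K+H)H_-\bigr)\geq 0$; since $HH_+=H_+^2$ and $HH_-=-H_-^2$, adding these gives $\mathrm{Tr}(K|H|)\geq\mathrm{Tr}(H_+^2)+\mathrm{Tr}(H_-^2)=\mathrm{Tr}(H^2)$. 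Equivalently, in an eigenbasis $(e_n)$ of $H$ with $He_n=\lambda_n e_n$ one has $\mathrm{Tr}(K|H|)=\sum_n|\lambda_n|\langle e_n,Ke_n\rangle\geq\sum_n\lambda_n^2$, because $\langle e_n,Ke_n\rangle\geq|\langle e_n,He_n\rangle|=|\lambda_n|$. With this repair the rest of your proof goes through.
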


This inequality has been generalized by various authors. In particular, 
Ando proved the following

\begin{theorem}{\bf (Generalized Powers-Stoermer inequality),\cite{A}}\\

Let $0<\alpha<1$ and let $A,B$ be positive operators in $\ell^p({\cal
H}),\,1\leq p<\infty$. 
Then 
$$ \begin{array}{lcr}
\parallel A^{\alpha} - B^{\alpha}\parallel_{\frac{p}{\alpha}}^{\frac{p}{\alpha}} & \leq
\,\,\,\parallel A-B\parallel_p^p. \\ 
\end{array}
\eqno(5.8)
$$
\end{theorem}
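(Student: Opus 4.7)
The strategy is to reduce the stated trace inequality to a singular-value majorization and then exploit Hardy-Littlewood-P\'olya monotonicity. Since $\parallel X\parallel_q^q = \sum_n s_n(X)^q$ for any compact $X$, the inequality (5.8) is equivalent to
$$\sum_{n\geq 1} s_n(A^\alpha - B^\alpha)^{p/\alpha}\,\leq\,\sum_{n\geq 1} s_n(A-B)^p.$$
A standard spectral truncation of $A$ and $B$ to finite rank, together with continuity of $T\mapsto T^\alpha$ on positive elements of $\ell^p({\cal H})$, allows one to prove this in the finite-dimensional case and pass to the limit, so I will focus on the algebraic heart of the matter.

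The scalar prototype is the elementary $|a^\alpha - b^\alpha|\leq |a-b|^\alpha$ for $a,b\geq 0$, which follows from the concavity and $\alpha$-subadditivity of $t\mapsto t^\alpha$. The non-commutative substitute I would invoke is the Rotfel'd-Ando majorization: for every non-negative concave $f$ on $[0,\infty)$ with $f(0)=0$ and all positive compact operators $A,B$,
$$s_n(f(A) - f(B))\,\prec_w\,s_n(f(|A-B|))$$
in the sense of weak submajorization. To establish this I would proceed in two steps: (i) Rotfel'd's subadditivity $s_n(f(X+Y))\prec_w s_n(f(X))+s_n(f(Y))$ for positive $X,Y$, derived from the Ky-Fan min-max description of partial sums of singular values combined with the concavity of $f$ on spectral subspaces; and (ii) a comparison of $|f(A)-f(B)|$ with $f(|A-B|)$, obtained by writing the positive and negative parts of $A-B$ as $(A-B)_\pm$, exploiting that $f$ is operator concave only in a very limited sense and reducing the remaining estimates to (i).

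Specializing to $f(t)=t^\alpha$ yields $s_n(A^\alpha - B^\alpha)\,\prec_w\,s_n(A-B)^\alpha$. Because $\phi(t)=t^{p/\alpha}$ is convex and increasing on $[0,\infty)$ (as $p/\alpha\geq 1$), the Hardy-Littlewood-P\'olya theorem implies that termwise application of $\phi$ preserves weak submajorization, so
$$\sum_n s_n(A^\alpha - B^\alpha)^{p/\alpha}\,\leq\,\sum_n s_n(A-B)^p,$$
which is exactly (5.8).

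The main obstacle is the singular-value majorization $s_n(f(A)-f(B))\prec_w s_n(f(|A-B|))$, i.e.\ Ando's theorem itself. Its rigorous proof in the infinite-dimensional Schatten setting requires a careful simultaneous spectral decomposition of two non-commuting positive compact operators and a compactness argument to pass from the Ky-Fan $k$-norms to the $\ell^{p/\alpha}$-norm; this is the substantive content of [A]. Granting that ingredient, the rest of the argument is a routine interpolation between Schatten norms via Hardy-Littlewood-P\'olya.
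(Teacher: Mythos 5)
The paper offers no proof of Theorem 5.5: the statement is quoted directly from Ando's article \cite{A}, and the only thing the paper does with it is apply it in the proof of Corollary 5.6 to control Mazur maps. So there is no internal argument to compare yours against; the question is only whether what you wrote constitutes a self-contained proof of (5.8).

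The outer layer of your argument is correct. Granting the weak majorization $s(f(A)-f(B))\prec_w s(f(|A-B|))$ for nonnegative concave $f$ with $f(0)=0$, specializing to $f(t)=t^\alpha$ gives $s_n(A^\alpha-B^\alpha)\prec_w s_n(A-B)^\alpha$, and composing with the increasing convex function $\phi(t)=t^{p/\alpha}$ (legitimate since $p/\alpha>1$, $\phi(0)=0$, and the dominating sum $\sum s_n(A-B)^p$ is finite because $A-B\in\ell^p$) yields (5.8) by Hardy--Littlewood--P\'olya together with the truncation you indicate; this is exactly how one passes from the unitarily-invariant-norm form of Ando's theorem to the Schatten estimate. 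The difficulty is that the majorization is not an auxiliary ingredient one may grant and still claim a proof: it \emph{is} the theorem, and it is precisely the content of the reference \cite{A} that the paper cites. Your two-step sketch of it does not close the gap. Step (i), Rotfel'd subadditivity, is a genuine lemma, but step (ii) --- the comparison of $|f(A)-f(B)|$ with $f(|A-B|)$, ``exploiting that $f$ is operator concave only in a very limited sense'' --- names no mechanism. In the case at hand $f(t)=t^\alpha$ is operator monotone, and the known route decomposes $A-B=(A-B)_+-(A-B)_-$, uses $A\leq B+(A-B)_+$ together with operator monotonicity to dominate $f(A)-f(B)$, and then feeds this into a subadditivity estimate at the level of Ky Fan norms, with a symmetric argument on the negative part; none of these estimates are carried out in your proposal. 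As written, you have a correct \emph{reduction} of (5.8) to Ando's majorization theorem, together with an honest admission that the hard part is what \cite{A} proves --- which is accurate, but leaves you at the same point the paper reaches by simply citing \cite{A}.
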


The generalized Powers-Stoermer inequalities are needed to establish the 

\begin{cor}
Let $1<p,q<\infty$. Then the Mazur map
$$ 
\begin{array}{cccc}
M_{p,q}: & S(\ell^p({\cal H})) & \longrightarrow & S(\ell^q({\cal H})) \\
 & T & \mapsto & sign(T)\vert T\vert^{\frac{p}{q}} \\ 
\end{array}
\eqno(5.9)
$$
is a uniformly continuous homeomorphism.
\end{cor}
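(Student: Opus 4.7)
The plan is to verify in turn: (a) the map is well-defined into $S(\ell^q({\cal H}))$, (b) $M_{q,p}$ is a two-sided inverse, and (c) $M_{p,q}$ is uniformly continuous. Applying (c) with $p$ and $q$ interchanged then yields uniform continuity of $M_{p,q}^{-1} = M_{q,p}$ and completes the proof.

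For (a), with polar decomposition $T = U|T|$ the singular values of $M_{p,q}(T) = U|T|^{p/q}$ are the $(p/q)$-th powers of those of $T$, so $\|M_{p,q}(T)\|_q^q = \|T\|_p^p = 1$. For (b), the polar decomposition of $U|T|^{p/q}$ has partial isometry $U$ and absolute value $|T|^{p/q}$, hence $M_{q,p}(M_{p,q}(T)) = U(|T|^{p/q})^{q/p} = U|T| = T$.

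The heart of the argument is (c). I would first reduce to self-adjoint operators via the $2\times 2$ dilation $T \mapsto \tilde T$, where $\tilde T$ acts on ${\cal H}\oplus{\cal H}$ with $T$ and $T^*$ as off-diagonal blocks and zero on the diagonal. Using the identity $U|T|^{p/q} = |T^*|^{p/q}\,U$ one checks that the Mazur map intertwines the dilation, $M_{p,q}(\tilde T) = \widetilde{M_{p,q}(T)}$, and since $\|\tilde S - \tilde T\|_r = 2^{1/r}\|S-T\|_r$ for every $r$, uniform continuity on self-adjoint elements transfers to the general case. For self-adjoint $A$ one splits $A = A_+ - A_-$; Kittaneh's estimate $\left\| |A|-|B| \right\|_p \leq \sqrt{2}\,\|A-B\|_p$ shows that $A \mapsto A_\pm$ is Lipschitz in the $p$-norm, reducing further to the case of positive operators, where $M_{p,q}$ simply raises $A$ to the power $p/q$.

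For $1 < p < q$, Ando's inequality (Theorem 5.5) with $\alpha = p/q < 1$ directly yields $\|A^{p/q} - B^{p/q}\|_q \leq \|A-B\|_p^{p/q}$, i.e.\ H\"older continuity of exponent $p/q$. The main obstacle is the complementary case $p > q$, where the exponent exceeds $1$ and lies outside Ando's range. Here I would exploit that every $A$ in the unit sphere of $\ell^p({\cal H})$ has operator norm at most $1$ (since $\sum s_i(A)^p = 1$ forces $s_i(A) \leq 1$), so the relevant functional calculus takes place on $[0,1]$ where $x \mapsto x^{p/q}$ is Lipschitz; combined with operator-Lipschitz estimates for power functions on a bounded interval in Schatten norms, this yields the desired Lipschitz bound for $M_{p,q}$. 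The case $p = q$ is trivial.
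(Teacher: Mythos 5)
Your reductions (a), (b), the $2\times 2$ self-adjoint dilation, the splitting into positive/negative parts via Kittaneh's estimate, and the case $p<q$ via Ando are all reasonable. The genuine gap is the case $p>q$, which you yourself flag as ``the main obstacle,'' and the remedy you propose does not close it. An operator-Lipschitz bound for $x\mapsto x^{p/q}$ on $[0,1]$ in Schatten $r$-norms has the form $\|A^{p/q}-B^{p/q}\|_r\le C_r\|A-B\|_r$ with the \emph{same} $r$ on both sides. What you need is $\|A^{p/q}-B^{p/q}\|_q\le\omega(\|A-B\|_p)$ with $q<p$; here the left side is in the \emph{stronger} norm $\|\cdot\|_q\ge\|\cdot\|_p$, while the data $A,B$ lie in $\ell^p$ only, not in $\ell^q$ (indeed $\ell^q\subsetneq\ell^p$ for $q<p$). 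So choosing $r=p$ bounds the wrong norm, and choosing $r=q$ is not even meaningful for $A-B$. Moreover, the naive ``Ando with $\alpha>1$'' that one might hope for, $\|A^\alpha-B^\alpha\|_{p/\alpha}\le\|A-B\|_p^\alpha$, is false already for scalars: with $a=1$, $b=1-\epsilon$ the left side is $\approx\alpha\epsilon$ while the right is $\epsilon^\alpha\ll\alpha\epsilon$. So some new idea is required precisely in this range.

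The paper handles $p>q$ by a different mechanism. It first uses the composition law $M_{q,r}\circ M_{p,q}=M_{p,r}$ to reduce to two base cases: (i) $1<q<p<3q$ with $p\ge 2$, and (ii) $p<q$ conjugate ($\tfrac1p+\tfrac1q=1$), in which case $M_{p,q}=\ast\circ\iota$ with $\iota:S(\ell^p)\to S(\ell^q)$ the canonical duality homeomorphism, uniformly continuous by uniform convexity/smoothness. In case (i) it never raises a positive operator to a power $>1$; instead it writes
$$M_{p,q}(T)=\operatorname{sign}(T)|T|^{p/q}=T(T^{\ast}T)^{r},\qquad r=\tfrac12\bigl(\tfrac{p}{q}-1\bigr)\in(0,1),$$
which is exactly the reason for the restriction $p<3q$. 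The telescoping estimate
$$\|S(S^{\ast}S)^r-T(T^{\ast}T)^r\|_q\le\|(S-T)(S^{\ast}S)^r\|_q+\|T\bigl((S^{\ast}S)^r-(T^{\ast}T)^r\bigr)\|_q$$
is then controlled by H\"older ($\tfrac1q=\tfrac1p+\tfrac{2r}{p}$) and by Ando applied with exponent $r<1$ to $(S^{\ast}S)^r-(T^{\ast}T)^r$ in $\ell^{p/2}\to\ell^{p/(2r)}$. This keeps every fractional power below $1$ and every Schatten index matched by H\"older. If you want to repair your argument along your own lines, you would need either to first invoke a composition reduction to a bounded ratio $p/q$ as the paper does, or to replace the operator-Lipschitz step by an estimate that genuinely transfers across Schatten indices; a self-adjoint/positive reduction alone is not enough.

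As a minor point, the dilation $\tilde T$ of a unit vector $T\in S(\ell^p)$ has $\|\tilde T\|_p=2^{1/p}$, so you should normalize before applying the Mazur map and check that the normalization constants on both sides match ($2^{1/p}\mapsto 2^{1/q}$); this does work, but it is worth stating. Similarly, the constant in Kittaneh's inequality in $\ell^p$, $1<p<\infty$, is some $C_p$ rather than $\sqrt 2$ in general, though any finite constant suffices for your purpose.
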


\begin{proof}
Note that $M_{p,p}=Id$ and $M_{q,r}\circ M_{p,q}=M_{p,r}$ for $1<p,q,r<\infty$.
Therefore it suffices to verify the claim in the following two cases:
$1<q<p<3q,\,p\geq 2,$ and $p<q,\,\frac{1}{p}+\frac{1}{q}=1$.
In the second case it is easily seen that 
$M_{p,q}\,=\,*\circ\iota$ where $\iota$ is the canonical homeomorphism (4.2)
and $*:\ell^q({\cal H})\to\ell^q({\cal H})$ maps an operator to its adjoint.
Thus $M_{p,q}$ is oviously uniformly continuous in this case. If
$1<q<p<3q,\,p\geq 2,$ we find with $0<r=\frac12(\frac{p}{q}-1)<1$ that
$$
M_{p,q}(T)\,=\,sgn(T)\vert T\vert^{\frac{p}{q}}\,=\,
T\vert T\vert^{2r}\,=\,T(T^*T)^r
$$
The Hoelder and the generalized Powers-Stoermer inequalities yield then 
for $S,T\in S(\ell^p({\cal H}))$
$$
\parallel M_{p,q}(S)-M_{p,q}(T)\parallel_q\,\leq
$$
$$
\leq\,\parallel(S-T)(S^*S)^r\parallel_q\,+\,\parallel
T((S^*S)^r-(T^*T)^r)\parallel_q 
$$
$$
\leq\,\parallel
S-T\parallel_p\parallel(S^*S)^r\parallel_{\frac{p}{2r}}\,+\,\parallel
T\parallel_p\parallel(S^*S)^r-(T^*T)^r\parallel_{\frac{p}{2r}} $$ $$
\leq\,\parallel S-T\parallel_p(\parallel
S^*S\parallel_{\frac{p}{2}})^r\,+\,\parallel
T\parallel_p(\parallel S^*S-T^*T\parallel_{\frac{p}{2}})^r
$$
$$
\leq\,\parallel S-T\parallel_p(\parallel
S^*\parallel_p\parallel S\parallel_p)^r\,+\,\parallel
T\parallel_p(\parallel
S^*-T^*\parallel_p\parallel
S\parallel_p\,+\,\parallel
T^*\parallel_p\parallel S -T\parallel_p)^r 
$$
$$
\leq\,\parallel S-T\parallel_p\,+\,(2\parallel S-T\parallel_p)^r
$$
which shows that $M_{p,q}$ is uniformly continuous.
\end{proof}

Similar estimates lead to 

 \begin{cor}
Let $(X,\mu)$ be a Borel space and let $1<p,q<\infty$. Then the Mazur map
$$ 
\begin{array}{cccc}
M_{p,q}: & S(L^p(X,\ell^p({\cal H}))) & \longrightarrow & S(L^q(X,\ell^q({\cal H}))) \\
 & f & \mapsto & sign(f)\vert f\vert^{\frac{p}{q}} \\ 
\end{array}
\eqno(5.10)
$$
is a uniformly continuous homeomorphism.
\end{cor}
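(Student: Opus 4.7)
The plan is to adapt the proof of Corollary 5.4 verbatim to the vector-valued setting, exploiting the fact that every step in that argument is defined pointwise in $t\in X$ and that the mixed norms $\|\cdot\|_{L^p(X,\ell^p({\cal H}))}$ behave well under the pointwise Schatten inequalities once combined with the scalar H\"older inequality on $(X,\mu)$. Using $M_{p,p}={\rm Id}$ and $M_{q,r}\circ M_{p,q}=M_{p,r}$, I would reduce to the two cases (a) $1<p<q$ with $\frac{1}{p}+\frac{1}{q}=1$ and (b) $1<q<p<3q$, $p\geq 2$, exactly as in the proof of 5.4.

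In case (a), Corollary 5.2 guarantees that both $L^p(X,\ell^p({\cal H}))$ and $L^q(X,\ell^q({\cal H}))$ are uniformly convex, and Lemma 5.3 identifies them as dual Banach spaces via the pairing $\langle f,g\rangle=\int_X{\rm Tr}(f(t)g(t))\,dt$. A pointwise polar decomposition computation shows that for $f=u|f|\in S(L^p(X,\ell^p({\cal H})))$ the duality homeomorphism $\iota$ of (4.2) is given by $\iota(f)=|f|^{p/q}u^*$, so that $\iota(f)=M_{p,q}(f)^*$. Since $\iota$ is uniformly continuous by uniform convexity and the pointwise adjoint is an isometric involution of $L^q(X,\ell^q({\cal H}))$, one concludes that $M_{p,q}=*\circ\iota$ is uniformly continuous.

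In case (b), set $r=\frac12(\frac{p}{q}-1)\in(0,1)$, so that $M_{p,q}(f)=f(f^*f)^r$ pointwise. The chain of estimates in the proof of 5.4 carries over once one has at one's disposal two integrated inequalities: a H\"older inequality $\|hk\|_{L^q(\ell^q)}\leq\|h\|_{L^{p_1}(\ell^{p_1})}\|k\|_{L^{p_2}(\ell^{p_2})}$ whenever $\frac{1}{p_1}+\frac{1}{p_2}=\frac{1}{q}$, and an integrated generalized Powers--Stoermer inequality $\|h^\alpha-k^\alpha\|_{L^{p/\alpha}(\ell^{p/\alpha})}^{p/\alpha}\leq\|h-k\|_{L^p(\ell^p)}^p$ for positive-valued $h,k$. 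Both follow by applying the pointwise Schatten estimate (namely (5.2) and Theorem 5.5) to $h(t),k(t)\in\ell^p({\cal H})$ and then integrating over $X$ via the scalar H\"older inequality with exponents $p_i/q$.

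The main technical point --- really the only thing beyond routine bookkeeping --- is verifying these two integrated inequalities and checking that the functional-calculus operations $f\mapsto|f|^\alpha$ and $f\mapsto{\rm sign}(f)|f|^{p/q}$ preserve the strong measurability of $\ell^p({\cal H})$-valued sections. Both points are standard. Once they are in place, the chain of inequalities in the proof of 5.4 transcribes verbatim to the present setting and produces the same final bound $\|f-g\|_{L^p(\ell^p)}+\bigl(2\|f-g\|_{L^p(\ell^p)}\bigr)^r$, thereby proving uniform continuity of $M_{p,q}$.
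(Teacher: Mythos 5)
Your proposal is correct and follows exactly the route the paper intends: the paper gives no proof of this corollary beyond the remark ``Similar estimates lead to,'' meaning one should transcribe the argument for the scalar Mazur map $S(\ell^p({\cal H}))\to S(\ell^q({\cal H}))$ pointwise in $t\in X$ and integrate, and that is precisely what you do, including the reduction via $M_{q,r}\circ M_{p,q}=M_{p,r}$, the duality identification $M_{p,q}=*\circ\iota$ in the conjugate-exponent case, and the integrated H\"older and Powers--St{\o}rmer estimates in the other case. (Minor note: the scalar corollary you cite as 5.4 is numbered 5.6 in the paper's counter, but the content is as you describe.)
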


\subsection{Rigidity of actions on Schatten ideals}

We are ready to apply the results of Bader, Furman, Gelander and Monod in our framework.

\begin{theorem}
Let $G$ be a product of higher rank groups as in section 3 and let $\rho$ be a unitary representation of $G$ on the Hilbert space $\cal H$. Let $\rho_p$ be the corresponding isometric representation on the Schatten ideals $\ell^p({\cal H})$. Then 
$$
H^1_{cont}(G,\rho_p)\,=\,0\,\,\,\text{for}\,\,\,1<p<\infty.
\eqno(5.11)
$$
\end{theorem}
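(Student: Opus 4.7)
The plan is to deduce this from Theorem 4.3 by exhibiting a suitable Hilbert companion representation and intertwining map, namely the Mazur map from the Schatten $p$-ideal to the Hilbert–Schmidt class.

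First I would set up the data. Take $B=\ell^p(\Hh)$ with the orthogonal representation $\rho_p(g)(T)=\rho(g)T\rho(g)^{-1}$, and take $\Hh'=\ell^2(\Hh)$ (the Hilbert–Schmidt operators, a Hilbert space) with the unitary representation $\rho_2(g)(T)=\rho(g)T\rho(g)^{-1}$. By Theorem 5.1, $\ell^p(\Hh)$ is uniformly convex for $1<p<\infty$; applying this to the conjugate exponent $q$ and using the duality $\ell^p(\Hh)^*\simeq\ell^q(\Hh)$ from Lemma 5.3, the space $\ell^p(\Hh)$ is also uniformly smooth. Thus the Banach-space hypotheses of Theorem 4.3 are satisfied.

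Next I would take $\Phi=M_{p,2}:S(\ell^p(\Hh))\to S(\ell^2(\Hh))$. Corollary 5.5 shows that $M_{p,2}$ is a uniformly continuous homeomorphism, and its inverse $M_{2,p}$ is uniformly continuous by the same corollary. It remains to verify that $M_{p,2}$ intertwines $\rho_p$ and $\rho_2$. Here one uses the polar decomposition $T=\mathrm{sign}(T)|T|$: since $\rho(g)$ is unitary, conjugation by $\rho(g)$ commutes with taking adjoints, absolute values, and the continuous functional calculus, hence
\[
\mathrm{sign}(\rho(g)T\rho(g)^{-1})\,=\,\rho(g)\,\mathrm{sign}(T)\,\rho(g)^{-1},
\qquad
|\rho(g)T\rho(g)^{-1}|^{p/2}\,=\,\rho(g)\,|T|^{p/2}\,\rho(g)^{-1}.
\]
Multiplying these identities yields $M_{p,2}(\rho_p(g)T)=\rho_2(g)M_{p,2}(T)$, which is exactly the required intertwining on the unit sphere.

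With all hypotheses of Theorem 4.3 verified, the conclusion $H^1_{cont}(G,\rho_p)=0$ follows immediately. I expect no real obstacle beyond the bookkeeping above; the substantive content (uniform convexity and smoothness, uniform continuity of the Mazur map, and the vanishing theorem itself) has already been assembled in Sections 4 and 5.1. The only small subtlety is making sure that $\rho_p$, which is a priori $\C$-linear isometric, is treated as an orthogonal (i.e.\ $\R$-linear isometric) representation in the sense of Theorem 4.3, which is automatic.
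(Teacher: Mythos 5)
Your proposal is correct and follows essentially the same route as the paper: apply Theorem 4.3 to $\rho_p$ on $\ell^p(\Hh)$, using Clarkson--MacCarthy plus duality to get uniform convexity and smoothness, and using the Mazur map $M_{p,2}$ to $\ell^2(\Hh)$ (Hilbert--Schmidt) as the uniformly continuous, equivariant sphere homeomorphism. The only nitpick is a reference slip (the uniform continuity of the Mazur map on Schatten classes is Corollary 5.6, not 5.5); your explicit verification that unitary conjugation commutes with polar decomposition and functional calculus, hence that $M_{p,2}$ intertwines $\rho_p$ and $\rho_2$, is a detail the paper leaves implicit but is correct.
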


Consider now a lattice $\Gamma\subset G$ in a higher rank group. The homogeneous space $G/\Gamma$ is then of finite volume. According to 
5.2 and 5.3 the Banach spaces $L^p(G/\Gamma,\,\ell^p({\cal H}))$ are uniformly convex and uniformly smooth for $1<p<\infty$. Suppose that 
in addition a unitary representation $\rho$ of $\Gamma$ on a Hilbert space $\cal H$ is given and let $\rho_p$ be the corresponding isometric representation on $\ell^p({\cal H})$. Then $L^p(G/\Gamma,\,\ell^p({\cal H}))$ can be identified with the completion of the space $L^{[p]}(G,\,\ell^p({\cal H}))^\Gamma$ of $\Gamma$-equivariant maps from $G$ to $\ell^p({\cal H})$ with respect to the norm 
$\parallel f\parallel^p\,=\,\int_{{\cal D}}\parallel f(g)\parallel^p dg$ (${\cal D}\subset G$ a Borel fundamental domain for $\Gamma$). The latter space carries a canonical isometric linear $G$-action coming from left translation. 
 
\begin{theorem}
Let $G$ be a higher rank group and let $\Gamma$ be a lattice in $G$. Let $\rho$ be a unitary representation of $\Gamma$. Then in the previous notations
$$
H^1_{cont}(G,L^{[p]}(G,\,\ell^p({\cal H}))^\Gamma)\,=\,0\,\,\,\text{for}\,\,\,1<p<\infty.
\eqno(5.12)
$$
\end{theorem}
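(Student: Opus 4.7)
The plan is to reduce the statement to Theorem 4.3 of Bader--Furman--Gelander--Monod (in the form stated as 4.3 in the excerpt), with the role of the Hilbert space played by the unitary induced representation and the $G$-equivariant sphere homeomorphism furnished by the noncommutative Mazur map of Corollary 5.6.

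First I would set up the two $G$-spaces. Let $B = L^{[p]}(G,\ell^p(\mathcal{H}))^{\Gamma}$ with its isometric left translation action by $G$. By the identification with $L^p(G/\Gamma,\ell^p(\mathcal{H}))$ (using a Borel fundamental domain $\mathcal{D}\subset G$ of finite volume), Corollary 5.2 together with Lemma 5.3 says that $B$ is uniformly convex and uniformly smooth. Parallel to this, form the Hilbert space $\mathcal{K} = L^{[2]}(G,\ell^2(\mathcal{H}))^{\Gamma} \cong L^2(G/\Gamma,\ell^2(\mathcal{H}))$, where $\Gamma$ acts on $\ell^2(\mathcal{H})$ by the isometric representation $\rho_2$ attached to $\rho$. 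The left translation action of $G$ on $\mathcal{K}$ is a continuous unitary representation (this is just the standard unitary induced representation of $\rho_2$ from $\Gamma$ to $G$, well defined because $G/\Gamma$ has finite volume).

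Next I would construct the intertwining sphere homeomorphism. Applying Corollary 5.6 to $X = G/\Gamma$ gives a uniformly continuous homeomorphism
$$M_{p,2}:\;S(B)\;\longrightarrow\;S(\mathcal{K}),\qquad f\mapsto \operatorname{sign}(f)\,|f|^{p/2},$$
with uniformly continuous inverse $M_{2,p}$. The key verification is that $M_{p,2}$ is $G$-equivariant. This decomposes into two compatibilities: (a) $M_{p,2}$ commutes with left translation in the base variable $g\in G/\Gamma$ because it is defined fibrewise; and (b) $M_{p,2}$ commutes with the $\Gamma$-action on the fibres, since for any unitary $U$ on $\mathcal{H}$ and any operator $T$ one has $|UTU^{-1}| = U|T|U^{-1}$ and $\operatorname{sign}(UTU^{-1}) = U\operatorname{sign}(T)U^{-1}$, so conjugation by unitaries commutes with $T\mapsto \operatorname{sign}(T)|T|^{p/2}$. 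These two compatibilities together ensure that $M_{p,2}$ descends to, and intertwines, the $G$-actions on $S(B)$ and $S(\mathcal{K})$.

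Finally, I would invoke Theorem 4.3 with the Banach space $B$ and Hilbert space $\mathcal{K}$ above and the equivariant uniform homeomorphism $\Phi = M_{p,2}$. All hypotheses having been checked, the conclusion is $H^1_{cont}(G,B) = 0$, which is exactly the statement of Theorem 5.7. The main point requiring care (and the principal obstacle to be spelled out) is the identification of $B$ with the induced space and the verification that the Mazur map is genuinely $G$-equivariant at the level of $\Gamma$-equivariant functions on $G$; everything else is a direct appeal to results already established in Sections 4 and 5.1.
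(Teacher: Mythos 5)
Your proof is correct and follows exactly the route the paper indicates --- the paper's own ``proof'' of (5.12) is the single line ``the demonstration of (5.12) is similar and uses 5.2 and 5.7,'' and you have fleshed this out: realise $L^{[p]}(G,\ell^p(\mathcal{H}))^{\Gamma}$ as $L^p(G/\Gamma,\ell^p(\mathcal{H}))$, observe via 5.2 and 5.3 that it is uniformly convex and uniformly smooth, check that the Mazur map to the induced Hilbert space $L^2(G/\Gamma,\ell^2(\mathcal{H}))$ is $G$-equivariant and uniformly bicontinuous, and invoke Theorem 4.3. One minor slip: the Mazur map you need on $S(L^p(G/\Gamma,\ell^p(\mathcal{H})))$ is Corollary 5.7, not 5.6 (which treats only the fibre $\ell^p(\mathcal{H})$).
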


\begin{cor}
Let $\Gamma$ be a lattice in a product of higher rank groups as in section 2 and let $\rho$ be a unitary representation of $\Gamma$ on the Hilbert space $\cal H$. Let $\rho_p$ be the corresponding isometric representation on the Schatten ideals $\ell^p({\cal H})$. Then 
$$
H^1(\Gamma,\rho_p)\,=\,0\,\,\,\text{for}\,\,\,1<p<\infty.
\eqno(5.13)
$$
\end{cor}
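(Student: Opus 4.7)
The plan is to deduce Corollary 5.9 from Theorem 5.8 via an Eckmann--Shapiro isomorphism in continuous group cohomology. The crucial observation is that the $G$-module $L^{[p]}(G,\ell^p({\cal H}))^\Gamma$ appearing in Theorem 5.8 is precisely the $L^p$-induced $G$-representation $\mathrm{Ind}_\Gamma^G(\ell^p({\cal H}),\rho_p)$ built from the $\Gamma$-representation $\rho_p$; indeed this is exactly how that module is constructed in the paragraph preceding Theorem 5.8, the $L^p$-integrability condition being meaningful because $G/\Gamma$ has finite volume.

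Given this identification, I would invoke the standard Shapiro isomorphism
$$H^1_{cont}\bigl(G,\,\mathrm{Ind}_\Gamma^G V\bigr)\ \simeq\ H^1(\Gamma,V)$$
for a lattice $\Gamma$ in a locally compact group $G$ and a suitable Banach $\Gamma$-module $V$, applied to $V=(\ell^p({\cal H}),\rho_p)$. At the cocycle level this isomorphism is realised using a Borel fundamental domain ${\cal D}\subset G$ and the associated Borel cocycle $g\mapsto\gamma(g)\in\Gamma$: a $\Gamma$-cocycle $\psi$ is sent to an explicit continuous $G$-cocycle $\widetilde\psi$ with values in the induced module, while the restriction of a $G$-cocycle to $\Gamma$ composed with evaluation against ${\cal D}$ recovers a $\Gamma$-cocycle, the two constructions being mutually inverse up to coboundaries. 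Combined with Theorem 5.8 this immediately yields $H^1(\Gamma,\rho_p)=0$. For a lattice in a finite product of higher rank groups, as in the actual statement of the corollary, exactly the same argument applies, using the product version of the BFGM vanishing theorem already quoted in the paper.

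The main technical point to check is the Shapiro isomorphism in the present $L^p$-context. One must verify that the induction formula lands in $Z^1_{cont}(G,\,\mathrm{Ind}_\Gamma^G V)$---i.e.\ that $\widetilde\psi$ depends continuously on $g\in G$ in the Banach-space norm of $\mathrm{Ind}_\Gamma^G\ell^p({\cal H})$---and that the induction and restriction constructions descend to mutually inverse maps on cohomology. These verifications are routine adaptations of the classical $L^2$-Shapiro lemma for unitary coefficients to isometric $L^p$-coefficients on uniformly convex Banach spaces, relying only on the finite covolume of $\Gamma$ and the isometric nature of $\rho_p$. The deep content of the corollary lies entirely on the $G$-side, encoded in Theorem 5.8 and ultimately in the BFGM rigidity theorem.
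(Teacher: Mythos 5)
Your proof is correct and takes essentially the same approach as the paper: both identify $L^{[p]}(G,\ell^p({\cal H}))^\Gamma$ as the $L^p$-induction of $\rho_p$ from $\Gamma$ to $G$ and invoke an Eckmann--Shapiro type isomorphism $H^1(\Gamma,\rho_p)\simeq H^1_{cont}(G,L^{[p]}(G,\ell^p({\cal H}))^\Gamma)$ before concluding via Theorem 5.8. The paper simply outsources the verification of this induction isomorphism to sections 8.1--8.2 of \cite{BFGM}, whereas you sketch the fundamental-domain construction yourself; the content is the same.
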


\begin{proof} (of the theorem)
We begin with a proof of (5.11). We want to apply theorem 4.3 to the isometric representations $\rho_p$ on the Schatten ideals $\ell^p({\cal H})$. For $1<p<\infty$ these ideals are uniformly convex and uniformly smooth 5.3. Note that $\ell^2({\cal H})$ is the Hilbert space of Hilbert-Schmidt operators on $\cal H$. By construction the Mazur map 
$M_{p,2}:\,S(\ell^p({\cal H}))\,\to\,S(\ell^2({\cal H}))$ (5.10) intertwines the isometric representations $\rho_p$ and $\rho_2$. Moreover $M_{p,2}$ as well as its inverse $M_{2,p}$ are uniformly continuous by 5.6. Thus Theorem 4.3 of Bader, Furman, Gelander and Monod applies and yields (5.11). The demonstration of (5.12) is similar and uses 5.2 and 5.7.
\end{proof}

\begin{proof} (of the corollary)
The well known idea is to use induction of representations and affine isometric actions from $\Gamma$ to $G$ in order to reduce the statement to (5.12). The arguments in \cite{BFGM}, section 8.1, 8.2 apply verbatim to our situation and yield 
$H^1(\Gamma,\rho_p)\,\simeq\,H^1_{cont}(G,L^{[p]}(G,\,\ell^p({\cal H}))^\Gamma)$
The claim follows then from 5.9.
\end{proof}

\subsection{Finitely summable modules over higher rank groups and lattices}

The following consequence of the preceding rigidity theorems will immediately lead to our main results

\begin{prop}
Let $G$ be a product of higher rank groups or a lattice in such a group. Let $1<p<\infty$. 
\begin{itemize}
\item[i)] Every $p$-summable Fredholm representation ${\cal E}\,=\,({\cal H},\rho,F)$ is smoothly operator homotopic (among $p$-summable representations) to a Fredholm representation ${\cal E}'\,=\,({\cal H},\rho,F')$ such that $[F',\rho(G)]=0$.
\item[ii)] Let ${\cal E}_i\,=\,({\cal H},\rho,F_i),\,i=0,1,$ be $p$-summable Fredholm representations which are smoothly operator homotopic and satisfy $[F_i,\rho(G)]=0$. Then there exists a smooth operator homotopy ${\cal E}_t\,=\,({\cal H},\rho,F_t),\,t\in[0,1],$ connecting ${\cal E}_0$ and ${\cal E}_1$ and satisfying $[F_t,\rho(G)]=0,\,\forall t\in[0,1].$
\end{itemize}
\end{prop}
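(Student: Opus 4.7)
The plan is to realize the failure of $F$ to commute with $\rho(G)$ as a continuous $1$-cocycle on $G$ with values in $\ell^p({\cal H})$, and then to cancel it using the vanishing $H^1_{cont}(G,\rho_p)=0$ established in theorem 5.8 (respectively corollary 5.9 for the lattice case).

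Concretely, given $({\cal H},\rho,F)\in Fred^{(p)}(G)$, set
$$\psi(g)\,:=\,\rho(g)F\rho(g)^{-1}-F\,=\,\rho_p(g)F-F.$$
The third bullet of definition 2.9 says exactly that $\psi\in C(G,\ell^p({\cal H}))$, and a direct calculation using $\rho(gh)=\rho(g)\rho(h)$ yields the cocycle identity $\psi(gh)=\psi(g)+\rho_p(g)\psi(h)$, so $\psi\in Z^1_{cont}(G,\rho_p)$. By the vanishing theorem there exists $T\in\ell^p({\cal H})$ with $\psi(g)=\rho_p(g)T-T$ for every $g$, and I would then set
$$F_t\,:=\,F\,-\,tT,\qquad t\in[0,1].$$
Because $\ell^p({\cal H})$ is a two-sided ideal in ${\cal L}({\cal H})$, each of $F_t^2-1$, $F_t^*-F_t$, and $\rho_p(g)F_t-F_t=(1-t)\psi(g)$ lies in $\ell^p({\cal H})$ with the same continuity in $g$ as the data for $F$, so $({\cal H},\rho,F_t)\in Fred^{(p)}(G)$ for every $t$. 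The remaining conditions of definition 2.11 are immediate because the $t$-dependence is affine. At $t=1$ the commutator $\rho_p(g)F_1-F_1$ vanishes identically, so $[F_1,\rho(G)]=0$, which proves part i).

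For part ii), given a smooth operator homotopy $F_t$ connecting $F_0$ and $F_1$, both commuting with $\rho(G)$, form the family of cocycles $\psi_t(g):=\rho_p(g)F_t-F_t$. The third bullet of definition 2.11 says precisely that $t\mapsto\psi_t$ is a smooth path in $Z^1_{cont}(G,\rho_p)$, and by hypothesis $\psi_0=\psi_1=0$. Lemma 4.1 then produces a smooth family $T_t\in\ell^p({\cal H})$ with $\psi_t(g)=\rho_p(g)T_t-T_t$; moreover the bounded section used in that lemma takes values in the canonical complement $B'=((B^*)^{\rho_p^*(G)})^{\ominus}$ of (4.3), on which the coboundary map $T\mapsto(g\mapsto\rho_p(g)T-T)$ is injective, so $T_t=0$ whenever $\psi_t=0$; in particular $T_0=T_1=0$. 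Setting $F_t':=F_t-T_t$ I obtain a smooth family with $F_0'=F_0$, $F_1'=F_1$, $[F_t',\rho(G)]=0$ for every $t$, and $F_t'-F_t=-T_t$ a smooth $\ell^p({\cal H})$-valued path; the ideal argument from part i) then shows that $({\cal H},\rho,F_t')$ is a smooth operator homotopy inside $Fred^{(p)}(G)$.

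The only nontrivial ingredient is the Bader-Furman-Gelander-Monod vanishing $H^1_{cont}(G,\rho_p)=0$; once that is granted, the construction is just a careful choice of primitive for the defect cocycle. The one small technical point is arranging that the primitive $T_t$ depends smoothly on $t$ and vanishes at endpoints where $\psi_t$ does, but this is built into lemma 4.1 through the canonical splitting (4.4), so no extra work is required.
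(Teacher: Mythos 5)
Your proof is correct and follows essentially the same route as the paper: realize the defect $g\mapsto\rho(g)F\rho(g)^{-1}-F$ as a cocycle in $Z^1_{cont}(G,\rho_p)$, invoke the vanishing of $H^1_{cont}(G,\rho_p)$ (Theorems 5.8/5.9 and Lemma 4.1) to obtain a primitive $T$ (resp.\ smooth family $T_t$) in $\ell^p({\cal H})$, and connect $F$ to $F-T$ by the affine path $F-sT$. Your added remark that the bounded linear section of Lemma 4.1 forces $T_0=T_1=0$ at the endpoints (since a linear section sends $0$ to $0$) makes explicit a point the paper's proof of ii) leaves tacit, and is a welcome clarification rather than a deviation.
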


\begin{proof}
Let ${\cal E}\,=\,({\cal H},\rho,F)$ be a $p$-summable Fredholm representation. Then the map $\psi:G\to\ell^p({\cal H}),\,
g\mapsto \rho(g)F\rho(g)^{-1}\,-\,F$ defines a continuous one-cocycle 
$\psi\in Z^1_{cont}(G,\rho_p)$. According to theorems 5.8, 5.9, this cocycle is a coboundary, i.e. $\psi(g)\,=\,\rho(g)F\rho(g)^{-1}\,-\,F\,=\,
\rho(g)T\rho(g)^{-1}\,-\,T$ for some operator $T\in\ell^p({\cal H})$ and all $g\in G$. Thus ${\cal E}_s\,=\,({\cal H},\rho,F-sT)$ is a smooth operator homotopy
between ${\cal E}$ and the desired Fredholm representation ${\cal E}'\,=\,({\cal H},\rho,F-T)$ satisfying $[F-T,\rho(G)]=0$.
If ${\cal E}_t\,=\,({\cal H},\rho,F_t),\,t\in[0,1]$ is a smooth operator homotopy then the corresponding family of cocycles is smooth. According to 4.1 there exists a smooth family $T_t\in\ell^p({\cal H}),\,t\in[0,1]$ 
satisfying $[F_t,\rho(g)]=[T_t,\rho(g)],\,\forall g\in G, t\in[0,1]$. Thus ${\cal E}'_t\,=\,({\cal H},\rho,F_t-T_t),\,t\in[0,1]$ defines the desired operator homotopy consisting of operators commuting strictly with $G$.
\end{proof}

For the proof of 3.1 we still need the well known

\begin{theorem} 
\cite{BFGM} {\bf (Moore Ergodicity Theorem)}\\
Let $G(k)$ be the group of $k$-rational points of a simply connected, 
semisimple algebraic group over the local field $k$. Let $\rho$ be a unitary representation of $G$ on a Hilbert space $\cal{H}$. Then either there are nonzero vectors fixed under $\rho(G(k))$ or all matrix coefficients of $\rho$ vanish at infinity, i.e.
$\{ g\mapsto\langle\xi,\rho(g)\eta\rangle\}\in C_0(G),\,\forall\xi, \eta\in{\cal H}$.
\end{theorem}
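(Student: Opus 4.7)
The plan is to run the classical Howe--Moore argument adapted to the local field setting. The strategy has three ingredients: a reduction to the case where $G$ is $k$-almost simple, the Cartan decomposition $G=KA^+K$ which expresses divergence in $G$ in terms of divergence in the split torus $A$, and the Mautner phenomenon which forces any weak-operator limit of $\rho(a_n)$ for $a_n\to\infty$ in $A^+$ to take values in the subspace of vectors invariant under suitably large unipotent subgroups.

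For the reduction, write $G=G_1\times\cdots\times G_r$ as a product of its almost-simple factors. A sequence in $G$ escapes to infinity if and only if it escapes in some factor; decomposing ${\cal H}$ into isotypical components for each $G_i$ and iterating, one reduces to the case where $G$ is $k$-almost simple. Fix a good maximal compact $K\subset G$ and a maximal $k$-split torus $A\subset G$ with positive Weyl chamber $A^+$. The Cartan decomposition---classical in the archimedean case, and coming from Bruhat--Tits theory in the non-archimedean case---allows one to write any sequence $g_n\to\infty$ as $g_n=k_n a_n k'_n$ with $a_n\in A^+$, $a_n\to\infty$, and, after passing to a subsequence, $k_n\to k$ and $k'_n\to k'$ in the compact group $K$. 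Since $\rho$ is unitary and $K$ compact, proving $\langle\xi,\rho(g_n)\eta\rangle\to 0$ for all $\xi,\eta\in{\cal H}$ reduces to proving that $\rho(a_n)\to 0$ in the weak operator topology on $({\cal H}^{\rho(G)})^\perp$.

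Let $T$ be any weak-operator accumulation point of $\{\rho(a_n)\}$. For every root $\alpha$ with $\alpha(\log a_n)\to +\infty$ and every $u\in U_\alpha$ (the corresponding root subgroup), the Mautner relation $a_n^{-1}u a_n\to e$ combined with the unitarity of $\rho(a_n)$ yields
$$\parallel\rho(u)\rho(a_n)\eta-\rho(a_n)\eta\parallel\,=\,\parallel\rho(a_n^{-1}u a_n)\eta-\eta\parallel\,\longrightarrow\,0,$$
so passing to weak limits shows that $T\eta$ is $U_\alpha$-invariant for every $\eta\in{\cal H}$. Applying this analysis to several sequences $a_n$ escaping along distinct walls of $A^+$, one harvests root subgroups $U_{\pm\alpha}$ which, together with $K$, generate the entire simple group $G$. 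Consequently the range of $T$ is contained in ${\cal H}^{\rho(G)}$; restricted to $({\cal H}^{\rho(G)})^\perp$ this forces $T=0$, so every weak accumulation point of $\rho(g_n)\eta$ on this complement vanishes, establishing the dichotomy.

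The principal obstacle is the last step: to select sufficiently many escape directions in $A^+$ so that the collection of root subgroups $U_{\pm\alpha}$ produced by Mautner, together with the maximal compact $K$, generate all of $G$. In the real case this is the standard generation theorem coming from the irreducibility of the restricted root system, while in the non-archimedean case one must invoke the structure theory of Bruhat--Tits. The remaining steps are routine Hilbert-space manipulations relying only on the unitarity of $\rho$ and the compactness of $K$.
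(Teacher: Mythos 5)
The paper does not actually prove this theorem; it is simply cited from \cite{BFGM}, and ultimately goes back to Moore and to Howe--Moore. Your sketch follows exactly the classical Howe--Moore strategy (Cartan decomposition plus Mautner phenomenon), which is the standard and correct approach, so there is no methodological divergence to compare. There is, however, a real gap in the final step, precisely the step you yourself flag as ``the principal obstacle.''

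The weak-operator limit $T$ is attached to a \emph{fixed} sequence $a_n$ (and subsequence). From that sequence the Mautner computation only produces invariance $\rho(u)T = T$ for $u$ in the root subgroups $U_\alpha$ with $\alpha(\log a_n)\to+\infty$; these generate at best the unipotent radical $U_P$ of a proper parabolic $P$ determined by the face of the chamber along which $a_n$ escapes. You cannot ``apply this analysis to several sequences escaping along distinct walls'' to gather more constraints on $T$, because each choice of escape direction yields a \emph{different} limit operator, and the theorem requires every such limit to vanish on $(\Hh^{\rho(G)})^\perp$. Moreover, your displayed Mautner identity is left-sided only; obtaining the opposite root groups $U_{-\alpha}$ requires the parallel right-sided identity $T\rho(u')=T$, $u'\in U_{-\alpha}$ (using $a_n u' a_n^{-1}\to e$), which you neither state nor use, and even then left- and right-invariance of $T$ refer to different families of root groups, so they do not combine immediately into invariance of a single subspace.

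What is actually needed to close the argument is a separate nontrivial ingredient: the \emph{vanishing theorem} asserting that in a unitary representation of an almost $k$-simple group, any vector fixed by the unipotent radical $U_P$ of a proper parabolic is already $G$-fixed. This relies essentially on structure theory (density of the big Bruhat cell $U_{P^-}\cdot P$, a Jacobson--Morozov or $A$-conjugation bootstrap to propagate invariance from $U_P$ to $U_{P^-}$, and the fact that $U_P$ together with $U_{P^-}$ generates $G$). Calling this a ``routine Hilbert-space manipulation relying only on the unitarity of $\rho$ and the compactness of $K$'' understates its content. With that lemma supplied and correctly applied to the single relevant sequence, the rest of your outline (reduction to the almost-simple case, Cartan decomposition with subsequential convergence in $K$, the left-sided Mautner estimate) is sound.
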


\begin{proof21} 
Let ${\cal E}\,=\,({\cal H},\rho,F)$ be a $p$-summable Fredholm representation of $G$. After a smooth operator homotopy we may suppose by 5.10.i) that $F$ and $\rho(G)$ strictly commute. Moreover we may suppose that $F$ is selfadjoint. The subspace $Ker(F)\subset{\cal H}$ as well as its orthogonal complement $\cal H'$ are then stable under $\rho(G)$. The triple ${\cal E}'\,=\,({\cal H}',\rho_{\vert{\cal H}'}, F'=F_{\vert{\cal H}'})$ is then smoothly operator homotopic to the degenerate triple ${\cal D}'\,=\,({\cal H}',\rho_{\vert{\cal H}'}, \frac{F'}{\vert F'\vert})$. Thus our original representation is equivalent to the finite dimensional Fredholm representation $(Ker F, \rho_{Ker F}, 0)$. It remains to show that 
every finite dimensional unitary representation of $G$ is trivial. 
If not, there is a finite dimensional unitary representation $\rho'$ of $G$ without fixed vectors. By Moore's ergodicity theorem the function 
$g\mapsto det(\rho'(g)), g\in G$, which is a homogeneous polynomial in the matrix coefficients of $\rho$, has to vanish at infinity. This contradicts the fact that this function is of absolute value 1 everywhere because $\rho'$ is unitary.
\end{proof21}

\begin{proof22} 
The same reasoning as in the proof of theorem 3.1 shows that the canonical homomorphism $R(\Gamma) \to {\cal KK}_\Gamma^{(p)}(\C,\C)$ is surjective and that\\
 ${\cal KK}^{(p)}_1((C^*_{max}(\Gamma),\C\Gamma),\C) = 0$. We want to show that the canonical map is injective as well. So let $[\rho], [\rho']\in R(\Gamma)$ be a virtual finite dimensional unitary representations of $\Gamma$ which define the same class in ${\cal KK}^{(p)}_0((C^*_{max}(\Gamma),\C\Gamma),\C)$.
According to 5.10 ii) one actually may find a homotopy of virtual unitary representations on a fixed finite dimensional $\Z/2\Z$-graded Hilbert space which connects $[\rho]$ and $[\rho']$. \\
Every higher rank group as considered in section 2 has Kazhdan's property T \cite{HV}, i.e. the trivial representation is an isolated point in the unitary dual with respect to the Fell topology. Property T is stable under taking finite products and passage to lattices \cite{HV}. Moreover it is easily seen that under the presence of property T not only the trivial representation but in fact every finite dimensional unitary representation is an isolated point in the unitary dual. Thus 
any homotopy of finite dimensional virtual unitary representations of a property T group is necessarily constant which proves our claim. 
\end{proof22}

\begin{proof23} 
Using 5.10 again we see (after using a smooth operator homotopy and the subtraction of a degenerate module) that every class in ${\cal KK}^{(p)}_*((C^*_{red}(\Gamma),\C\Gamma),\C)$ may be represented by a Fredholm module ${\cal E}\,=\,({\cal H}',\rho,0)$ over $C^*_{red}(\Gamma)$ with finite dimensional underlying Hilbert space ${\cal H}'$. According to \cite{Di}, 18.9.5 every finite dimensional representation of the reduced group $C^*$-algebra of a nonamenable locally compact group is identically zero. This applies in particular to the lattices under consideration which possess property T and are not compact and therefore nonamenable. Thus the considered Fredholm module $\cal E$ is identically zero
and the theorem follows.  
\end{proof23}

\end{document}